\theoremstyle{plain}
\numberwithin{equation}{section}
\newtheorem{thm}{Theorem}[section]
\newtheorem{cor}[thm]{Corollary}
\newtheorem{lem}[thm]{Lemma}
\newtheorem{conj}{Conjecture}
\theoremstyle{definition}
\newtheorem{df}[thm]{Definition}
\newtheorem{rmk}[thm]{Remark}
\newcommand{\mb}{\mathbb}
\newcommand{\mf}{\mathfrak}
\newcommand{\ml}{\mathcal}
\newcommand{\CH}{{\rm CH}}
\newcommand{\T}{\boldsymbol{T}}
\begin{document}
\author{Rin Sugiyama\footnote{The author is supported by JSPS Research Fellowships for Young Scientists.}
}
\title{Tate conjecture for products of Fermat varieties over finite fields}
\date{}
\maketitle
\begin{abstract}
We prove under some assumptions that the Tate conjecture holds for products of Fermat varieties of different degrees. The method is to use a combinatorial property of eigenvalues of geometric Frobenius acting on $\ell$-adic \'etale cohomology.
\end{abstract}

\noindent{Mathematics Subject Classification(2010)}: 14G15, 11G25, 14H52\bigskip

\noindent{Keywords}: Tate conjecture, Chow group, finite fields

\section{Introduction}
Let $p$ be a prime number. Let $k$ be a finite field of characteristic $p$. Let $\overline{k}$ be a separable closure of $k$ and let $G_k$ be the Galois group of $\overline{k}/k$. Let $\ell$ be a prime number different from $p$. For a projective smooth and geometrically integral variety $X$ over $k$, $\CH^i(X)$ denotes the Chow group of algebraic cycles on $X$ of codimension $i$ modulo rational equivalence. We first state the following famous conjecture raised by Tate \cite{Ta1}:
\begin{conj}[$\T(X/k,i)$]
The cycle class map
\begin{equation}\label{map}
\rho_X^i : \CH^i(X)\otimes \mb{Q}_{\ell}\longrightarrow H^{2i}(\overline{X},\mb{Q}_{\ell}(i))^{G_k}
\end{equation}
is surjective.
\end{conj}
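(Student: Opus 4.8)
To attack $\T(X/k,i)$ in the case relevant here, namely when $X = F^{n_1}_{m_1}\times\cdots\times F^{n_r}_{m_r}$ is a product of Fermat varieties of (suitably restricted) degrees over $k$, the plan is to reduce the whole problem to the classical case of Fermat curves. By a standard transfer argument $\T(X/k',i)$ implies $\T(X/k,i)$ for any finite extension $k'/k$, so we may first enlarge $k$ to contain the $m_j$-th roots of unity; then the group $\mu=\prod_j\mu_{m_j}$ of coordinate-rescaling automorphisms is defined over $k$, and combining its action with the K\"unneth formula splits $H^{2i}(\overline X,\mb{Q}_{\ell}(i))$ as a direct sum, over K\"unneth indices $\sum_j a_j=2i$ and admissible character tuples $(\alpha_j)$, of the tensor products $\bigotimes_j V_j(\alpha_j)$ of one-dimensional eigenspaces, plus the contribution of the cohomology pulled back from projective space. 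On $V_j(\alpha_j)$ geometric Frobenius acts by an explicit Jacobi sum $J_j(\alpha_j)$ of absolute value $q^{a_j/2}$; hence $H^{2i}(\overline X,\mb{Q}_{\ell}(i))^{G_k}$ is spanned by those $\bigotimes_j V_j(\alpha_j)$ with $\prod_j J_j(\alpha_j)=q^i$ (together with the evident powers of hyperplane classes), and it suffices to exhibit, for each such tuple, an algebraic cycle on $X$ with nonzero class on the corresponding line.

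The second step is the geometric reduction. Using Shioda's inductive structure for Fermat varieties --- the algebraic correspondences relating $F^n_m$ to products $F^a_m\times F^b_m$ with $a+b=n-1$, to $F^{n-2}_m$, and to $F^n_{m'}$ for $m'\mid m$ --- each eigenspace $V_j(\alpha)$, hence each $\bigotimes_j V_j(\alpha_j)$, is realized (after a prescribed Tate twist, by an algebraic correspondence) as a direct summand of the cohomology of a product of Fermat curves $C_{m_j}=F^1_{m_j}$. After this reduction the entire question is governed by the multiplicative behaviour of the Jacobi sums of Fermat curves, equivalently of Gauss sums. The third step, which is the combinatorial core of the paper and the place where the hypotheses on the $m_j$ and on $p$ are used, is to prove that such a product $\prod_s J_s$ of Jacobi sums of Fermat curves can equal a power of $q$ only when the factors fall into pairs $\{J_s,J_{s'}\}$ with each $J_sJ_{s'}$ already a power of $q$, plus possibly ``diagonal'' singletons that are themselves powers of $q$. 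This is a statement about the prime-ideal factorization of Gauss sums via Stickelberger's theorem and about the multiplicative order of $q$ modulo $\mathrm{lcm}(m_j)$; allowing \emph{different} degrees $m_j$ is exactly what forbids genuine threefold-or-higher coincidences among the $J_s$.

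Granting the combinatorial step, the cycles are then produced concretely: a pair with $J_sJ_{s'}=q^c$ corresponds to a Tate class in $H^1(\overline C)\otimes H^1(\overline{C'})(c)$ for two Fermat curves, which is algebraic by Tate's theorem on homomorphisms of abelian varieties over finite fields, while a diagonal singleton corresponds to a power of a hyperplane class; transporting these back through the Shioda correspondences and the K\"unneth isomorphism gives the required algebraic classes on $X$. One then checks that the classes so obtained \emph{span} (and do not merely meet) $H^{2i}(\overline X,\mb{Q}_{\ell}(i))^{G_k}$, which holds because the correspondences used restrict to $G_k$-equivariant isomorphisms on the eigenspaces in play. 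I expect the combinatorial core to be the main obstacle: pinning down precisely which arithmetic conditions on $(m_1,\dots,m_r)$ and $p$ force this pairwise-coincidence phenomenon for products of Gauss sums --- and excluding sporadic higher coincidences --- is where the real difficulty lies, whereas the reduction to curves and the construction of cycles are essentially bookkeeping around the Shioda--Katsura machinery and Tate's theorem.
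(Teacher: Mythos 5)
Your plan diverges from the paper's, and its load-bearing step does not hold in the generality you need. The trouble is created by your own reduction to curves: after applying the Shioda--Katsura inductive structure to a single factor $X_{m_j}^{r_j}$ with $r_j\ge 2$, you are looking at tensor products of $H^1$ of several Fermat curves \emph{of the same degree} $m_j$, and for same-degree Jacobi sums the claim ``a product of Jacobi sums equal to a power of $q$ forces a decomposition into pairs (plus diagonal singletons)'' is exactly what fails in general. The middle cohomology of an even-dimensional Fermat variety carries Tate classes whose Jacobi-sum products equal $q^{r_j/2}$ without splitting into divisor-type pairs; if your combinatorial step were provable there, the Tate conjecture for all Fermat varieties would follow from Tate's theorem on divisors, whereas Shioda--Katsura needed $p^{\nu}\equiv-1 \bmod m$ and the general case is open. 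This is precisely why Theorem \ref{t} \emph{assumes} $\T(X_{m_j}^{r_j}/L,r_j/2)$ for the even-dimensional factors (hypotheses (3)(ii), (4)(iii)) and, for odd-dimensional factors, imposes that the order of $p$ in $(\mb{Z}/m_j)^{\times}$ be odd, which is used via the valuation criterion of Lemma \ref{lsk} to show the relevant products are not even rational (Lemma \ref{l3}). Two further points: Tate's theorem only makes weight-two (divisor) Tate classes on products of curves algebraic, so realizing an eigenspace inside $H^1(\overline{C}_1)\otimes\cdots\otimes H^1(\overline{C}_{2c})$ with $c\ge2$ produces no cycle by itself; and the inductive structure is stated for Fermat varieties with equal coefficients (and $\zeta_{2m}\in k$), while the theorem allows arbitrary coefficients ${\bf a}_j$ --- the paper uses that structure only for an alternative proof of Corollary \ref{cf}.

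For contrast, the paper never reduces to curves and never needs Stickelberger's theorem for the pairing phenomenon. The different-degree hypothesis ${\rm gcd}(m_j,m_{j'})\le 2$ is exploited through the triviality of the intersection $\mb{Q}(\zeta_{m_j})\cap\mb{Q}(\{\zeta_{m_{j'}}\colon j'\neq j\})=\mb{Q}$: from a relation $\alpha_1\cdots\alpha_d=q^i$ and Lemma \ref{l1}, some power of a single factor (or of a pair-product, in the curve case) is rational, and the Weil absolute value then forces it to be a power of $q$ (Lemmas \ref{l2}, \ref{l4}). The only algebraicity inputs are Tate's theorem for a product of two curves, the one-dimensionality of the off-middle cohomology of $X_m^r$, Soul\'e's criterion for case (1)(b), and the assumed Tate conjecture for each even-dimensional factor; a descent lemma (Lemma \ref{l5}) and a commutative diagram of cycle maps finish the argument. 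So the parts you dismiss as bookkeeping are where the actual proof lives, and the ``combinatorial core'' as you state it cannot be established without importing the paper's hypotheses, at which point the reduction to curves is unnecessary.
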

Here $H^{2i}(\overline{X},\mb{Q}_{\ell}(i))^{G_k}$ denotes the $G_k$-invariant part of the $\ell$-adic cohomology $H^{2i}(\overline{X},\mb{Q}_{\ell}(i))$ (cf. Notation) of $\overline{X}:=X\times_k\overline{k}$. Tate \cite{Ta2} proved that $\T(X/k,1)$ holds in case where $X$ is an abelian variety or a product of curves. In case where $X$ is a  product of curves, Soul\'e \cite{So} proved that the Tate conjecture holds for $i=0,1,\dim X-1, \dim X$. Spiess \cite{Sp} proved that the Tate conjecture holds for products of elliptic curves. For other known cases we refer to \cite{Ta3}.

Beilinson \cite{Be} furthermore conjectured that $\rho_X^i$ is also injective. Therefore the map $\rho_X^i$ is conjectured to be bijective and we call the conjecture \textit{the Tate-Beilinson conjecture}. 

Let $K_i(X)$ be Quillen's higher $K$-groups associated to the category of vector bundles on $X$. Let $K_i(X)_{\mb{Q}}=K_i(X)\otimes \mb{Q}$. The Tate conjecture can be reformulated in terms of $K$-groups, because there is an  isomorphism
$$K_0(X)^{(i)}_{\mb{Q}}\simeq \CH^i(X)\otimes \mb{Q}.$$
Here $K_0(X)^{(i)}_{\mb{Q}}$ denotes the subspace of the rational $K$-group $K_0(X)_{\mb{Q}}$ on which the Adams operator $\psi^m$ acts as the multiplication by $m^i$ for all $m\in \mb{N}$. For higher $K$-groups of projective smooth varieties over finite fields, there is a conjecture raised by Parshin:
\begin{conj} For every projective smooth variety $X$ over a finite field and every integer $i>0$, $K_i(X)_{\mb{Q}}=0$.
\end{conj}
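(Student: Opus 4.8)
Parshin's conjecture is a well-known open problem, so what follows is a strategy together with the precise point at which it meets an essential obstruction. The first step is to move out of $K$-theory. By the Adams eigenspace decomposition $K_n(X)_{\mb{Q}}=\bigoplus_j K_n(X)^{(j)}_{\mb{Q}}$ and the comparison $K_n(X)^{(j)}_{\mb{Q}}\cong \CH^j(X,n)_{\mb{Q}}\cong H^{2j-n}_{\ml{M}}(X,\mb{Q}(j))$ identifying the eigenspaces of higher $K$-theory with Bloch's higher Chow groups and with rational motivic cohomology, the conjecture is equivalent to the vanishing $\CH^j(X,n)_{\mb{Q}}=0$ for all $n>0$; the surviving $n=0$ pieces are exactly the classical Chow groups $\CH^j(X)_{\mb{Q}}$. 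Since $\CH^j(X,n)_{\mb{Q}}$ vanishes automatically for $n<0$, the content is that the motivic cohomology $H^m_{\ml{M}}(X,\mb{Q}(j))$, supported in degrees $m\le 2j$, is concentrated on the diagonal $m=2j$. I would prove this concentration.

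The second step is the tool that makes finite fields special. Since $G_k\cong\widehat{\mb{Z}}$ has cohomological dimension one, the Hochschild--Serre spectral sequence degenerates into short exact sequences
\begin{equation*}
0\longrightarrow H^{t-1}(\overline{X},\mb{Q}_{\ell}(j))_{G_k}\longrightarrow H^{t}(X,\mb{Q}_{\ell}(j))\longrightarrow H^{t}(\overline{X},\mb{Q}_{\ell}(j))^{G_k}\longrightarrow 0,
\end{equation*}
where the invariants and coinvariants are the kernel and cokernel of $1-F$ for $F$ the geometric Frobenius. By Deligne's theorem the eigenvalues of $F$ on $H^{t}(\overline{X},\mb{Q}_{\ell})$ have absolute value $q^{t/2}$, so after the twist $(j)$ they have absolute value $q^{t/2-j}$; hence $1-F$ is invertible, and both invariants and coinvariants vanish, unless $t=2j$. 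This is the ``combinatorial property of eigenvalues of geometric Frobenius'' of the abstract: off the middle weight no eigenvalue can equal a power of $q$, while for a product of Fermat varieties the eigenvalues are products of Jacobi sums whose factorizations are governed by Stickelberger's theorem, so the diagonal condition reduces to a finite combinatorial check on the defining characters. I would transport this weight vanishing through the cycle/regulator map $H^m_{\ml{M}}(X,\mb{Q}(j))\to H^m(X,\mb{Q}_{\ell}(j))$ — a comparison isomorphism in low degrees via the Bloch--Kato norm residue theorem (Beilinson--Lichtenbaum) — to annihilate the off-diagonal motivic groups.

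The decisive and genuinely obstructed step is the diagonal $t=2j$. Here $1-F$ need not be invertible: its kernel is exactly the cycle-theoretic part, and controlling it requires three inputs, each a deep open conjecture in general. One needs the Tate conjecture $\T(X/k,j)$ to identify $H^{2j}(\overline{X},\mb{Q}_{\ell}(j))^{G_k}$ with the span of algebraic cycle classes; the semisimplicity of $F$ on $H^{2j}$, so that the coinvariant contribution to $H^{2j+1}(X,\mb{Q}_{\ell}(j))$ is matched with the invariant part and produces no spurious class; and Beilinson's conjecture that rational and numerical equivalence coincide rationally, which forces the cycle map to be injective and thereby rules out a nonzero off-diagonal motivic class in weight $j$. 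By Geisser's reduction the conjunction of these statements for \emph{all} smooth projective varieties over \emph{all} finite fields is equivalent to Parshin's conjecture, so none of the three can be evaded.

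The main obstacle is therefore precisely the Tate--Beilinson conjecture in the middle degree, which is unknown in general; this is why the statement is recorded as a conjecture and not a theorem. The leverage I would draw on is the main result of the present paper, which establishes the Tate conjecture for products of Fermat varieties through the Jacobi-sum combinatorics sketched above. Combined with the semisimplicity of Frobenius and the agreement of rational and numerical equivalence already available for such varieties, this yields Parshin's conjecture for that class; the general case must await a proof of the Tate--Beilinson conjecture.
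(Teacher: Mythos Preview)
The paper contains no proof of this statement: it is Parshin's conjecture, recorded as an open conjecture and never claimed as a theorem. The paper's only discussion of it is exactly the conditional one you reach at the end --- Geisser's reduction (Tate $+$ rational $=$ numerical $\Rightarrow$ Parshin) and Kahn's verification of the hypotheses for the class $\ml{B}_{tate}(k)$, which together with Theorem~\ref{t} yield Parshin's conjecture for the specific products of Fermat varieties treated here (this is part of Corollary~\ref{ct}). So there is nothing to compare your proposal against, and your final paragraph is in line with what the paper actually asserts.

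That said, your ``second step'' contains a genuine gap that you should not gloss over. You argue that Deligne's weight bounds force $H^m(X,\mb{Q}_\ell(j))=0$ for $m\neq 2j$, and then propose to ``transport this weight vanishing through the cycle/regulator map'' using Beilinson--Lichtenbaum to kill the off-diagonal motivic groups. This does not work: the Beilinson--Lichtenbaum theorem is a statement about \emph{finite} coefficients in the range $m\le j$, and gives no rational comparison in the relevant range $j<m<2j$. Knowing that the \emph{target} of the regulator vanishes tells you nothing about the \emph{source} unless the regulator is injective --- and rational injectivity of the cycle/regulator map is precisely Beilinson's conjecture, which you only invoke later at the diagonal. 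In other words, the off-diagonal vanishing of $\CH^j(X,n)_{\mb{Q}}$ for $n>0$ is not a formal consequence of weights; it is the content of Parshin's conjecture itself, and Geisser's argument deduces it from Tate $+$ (rational $=$ numerical) by a d\'evissage to curves rather than by a direct comparison isomorphism. Your identification of the essential obstruction is correct, but it already bites at step two, not only at the diagonal.
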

By the Bass conjecture \cite{Ba} on finite generation of $K$-groups, these groups are expected to be finite. Geisser \cite{Ge} proved that if the Tate conjecture holds and numerical and rational equivalence over finite fields agree, then Parshin's conjecture holds for all projective smooth varieties over finite fields. Kahn \cite{Ka} considered the class $\ml{B}_{tate}(k)$ of projective smooth varieties of Abelian type over $k$, for which the Tate conjecture holds. A projective smooth variety $X$ is called of Abelian type if the Chow motive of $X$ belongs to the subcategory of the category of Chow motives generated by Artin motives and Chow motives of abelian varieties. For example, products of curves and Fermat varieties are of Abelian type. For any  variety $X$ in $\ml{B}_{tate}(k)$, Kahn proved that rational and numerical equivalence agree, namely the Tate-Beilinson conjecture holds, and that Parshin's conjecture holds for $X$ using Geisser's result \cite{Ge} and Kimura's result \cite{Ki}.

Parshin's conjecture implies that for all $n\geq0$ and $i>0$, $\CH^n(X,i)_{\mb{Q}}=0$ by the isomorphism $\displaystyle K_i(X)_{\mb{Q}} \simeq \bigoplus _{n\ge0}\CH^n(X,i)_{\mb{Q}}$ (\cite{Bl}). Here $\CH^n(X,i)$ denotes Bloch's higher Chow group (\cite{Bl}) and $\CH^n(X,i)_{\mb{Q}}:=\CH^n(X,i)\otimes \mb{Q}$. By Deligne's proof of the Weil conjecture \cite{De}, we have $H^{2i-j}(\overline{X},\mb{Q}_{\ell}(i))^{G_k}=0$ for $j\neq0$. Therefore we can combine the Tate-Beilinson conjecture and the Parshin conjecture into a conjecture below. The results of Geisser and Kahn mentioned above imply that the following conjecture holds for any variety $X$ in $\ml{B}_{tate}(k)$:
\begin{conj}\label{cj}
The cycle map
\begin{align*}
\CH^i(X,j)\otimes \mb{Q}_{\ell}\longrightarrow H^{2i-j}(\overline{X},\mb{Q}_{\ell}(i))^{G_k}
\end{align*}
is bijective for all integers $i,j\geq0$.
\end{conj}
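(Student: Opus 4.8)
The plan is to prove the displayed assertion preceding the conjecture, namely that Conjecture \ref{cj} holds for every $X$ in $\ml{B}_{tate}(k)$, by separating the statement according to the weight of the target cohomology group. First I would invoke Deligne's purity theorem \cite{De}: since $X$ is projective, smooth and geometrically integral over the finite field $k$, the geometric Frobenius acts on $H^{2i-j}(\overline{X},\mb{Q}_{\ell})$ with eigenvalues of absolute value $q^{(2i-j)/2}$ (with $q=\#k$), hence on the Tate twist $H^{2i-j}(\overline{X},\mb{Q}_{\ell}(i))$ with eigenvalues of absolute value $q^{-j/2}$. For $j>0$ no such eigenvalue equals $1$, so $H^{2i-j}(\overline{X},\mb{Q}_{\ell}(i))^{G_k}=0$, and the cycle map of Conjecture \ref{cj} is bijective if and only if its source vanishes. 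Thus the problem splits into two cases: (a) $j=0$, which is the Tate--Beilinson conjecture for $\rho_X^i$; and (b) $j>0$, which is the vanishing $\CH^i(X,j)_{\mb{Q}}=0$.

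For case (b) I would use the decomposition $K_j(X)_{\mb{Q}}\simeq\bigoplus_{n\ge 0}\CH^n(X,j)_{\mb{Q}}$ of \cite{Bl}, which reduces the required vanishing to Parshin's conjecture $K_j(X)_{\mb{Q}}=0$ for $j>0$. This is one of the two conclusions of Kahn's theorem \cite{Ka} for $X\in\ml{B}_{tate}(k)$: Kahn deduces it from Geisser's implication \cite{Ge} (the Tate conjecture together with agreement of rational and numerical equivalence forces Parshin's conjecture) combined with Kimura's finite-dimensionality of motives of abelian type \cite{Ki}, which supplies the needed agreement of rational and numerical equivalence. For case (a), surjectivity of $\rho_X^i$ is the Tate conjecture, which holds by the very definition of $\ml{B}_{tate}(k)$; injectivity is Beilinson's conjecture. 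Here one factors $\rho_X^i$ through the group $A^i_{\mathrm{num}}(X)$ of codimension-$i$ cycles modulo numerical equivalence: Kahn's agreement of rational and numerical equivalence collapses the source onto $A^i_{\mathrm{num}}(X)\otimes\mb{Q}_{\ell}$, while the Tate conjecture in all codimensions together with semisimplicity of the Frobenius action (again available for abelian type) identifies numerical equivalence with $\ell$-adic homological equivalence by the standard argument of Tate, so that the factored map is injective. Combining the two cases gives the assertion for all $i,j\ge 0$.

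The step I expect to be the main obstacle is that everything after the purity reduction rests on inputs presently available only for motives of abelian type: the Kimura finite-dimensionality used by Kahn to force agreement of rational and numerical equivalence, the semisimplicity of the geometric Frobenius (which for abelian type reduces to the theorems of Tate and Zarhin on endomorphisms of abelian varieties), and the Tate conjecture itself, which is the defining hypothesis of $\ml{B}_{tate}(k)$. The delicate point within Kahn's argument is precisely the interplay between finite-dimensionality and the Tate conjecture. Outside the abelian-type range none of these ingredients is known, which is why the remainder of the paper must be devoted to verifying membership in $\ml{B}_{tate}(k)$ for products of Fermat varieties by the combinatorial analysis of Frobenius eigenvalues announced in the abstract.
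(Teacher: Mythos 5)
Your proposal is correct and follows essentially the same route as the paper: the target vanishes for $j>0$ by Deligne's weight argument, the source vanishes for $j>0$ via Bloch's decomposition of $K_j(X)_{\mb{Q}}$ and Parshin's conjecture (Geisser's criterion plus Kahn's rational-equals-numerical result, resting on Kimura finite-dimensionality), and the case $j=0$ is the Tate--Beilinson conjecture supplied by the definition of $\ml{B}_{tate}(k)$ together with Kahn's theorem. The only cosmetic difference is that your injectivity argument for $j=0$ invokes semisimplicity and Tate's numerical-equals-homological argument, whereas it already follows directly from rational-equals-numerical since homological equivalence sits between the two.
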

\bigskip

For a non-negative integer $r$ and a positive integer $m$ prime to $p$, we define a variety $X_m^r=X_m^r(a_0,\dots,a_{r+1}) \subset \mb{P}^{r+1}$ of dimension $r$ and of degree $m$ by the equation
$$a_0x_0^m+a_1x_1^m+\cdots +a_{r+1}x_{r+1}^m=0.$$
Here $a_i$ is a non-zero elements in $k$. In case $a_0=\cdots=a_{r+1}\neq0$, $X_m^r$ is called a Fermat variety and denoted by $V_m^r$. Shioda-Katsura \cite{SK,Sh} proved that the Tate conjecture holds for a product $V_m^{r_1}\times\cdots\times V_m^{r_s}$ of Fermat varieties of same degree under an arithmetic condition(for example, $m$ is a prime and $p \equiv 1 \mod m$).

In this paper, we are concerned with products of $X_m^r$ of different degrees, and prove the following theorem:
\begin{thm}\label{t} Let $k$ be a finite field of characteristic $p$. Let $m_1, \ldots, m_d$ be positive integers prime to $p$. Let $r_1,\dots,r_d$ be positive integers. Let $X_{m_j}^{r_j}=X_{m_j}^{r_j}({\bf a}_j)$ with coefficients ${\bf a}_j \in (k^{\times})^{r_j+2}$. Let $X$ be the product $X_{m_1}^{r_1}\times  \cdots \times X_{m_d}^{r_d}$. Then $\T(X/k,i)$ is true for all $i$ in the following cases:
\begin{itemize}
\item[\noindent$(1)$] In case that $r_j=1$ for all $j$, assume that one of the following two conditions holds:
\begin{itemize}
\item[\textup{(a)}] For each $1 \leq j\leq d$, there is at most one $j^{\prime} \neq j$ such that ${\rm gcd}(m_j,m_{j^{\prime}})> 2$.

\item[\textup{(b)}] For every even integer $j$ with $4\leq j\leq d$ and for every $1\leq n_1<n_2<\dots<n_j\leq d$, there exists an integer $a$ with $1\leq a\leq j$ such that 
\begin{itemize}
\item[\textup{(i)}] ${\rm gcd}(m_{n_a},m_{n_r})\leq 2$ for any $r \not =a$,

\item[\textup{(ii)}] the order of $p$ in the group $(\mb{Z}/m_{n_a})^{\times}$ is odd.
\end{itemize}
In this case, moreover, we obtain that $\CH^i(X)$ is the direct sum of a free abelian group of finite rank and a group of finite exponent for all $i$.
\end{itemize}
\item[\noindent$(2)$] In case that $r_j$ is odd for all $j$, assume that the following conditions hold{\rm :}

\textup{(i)} ${\rm gcd}(m_j,m_{j^{\prime}})\leq 2$ for $j \neq j^{\prime}$,

\textup{(ii)} the order of $p$ in the group $(\mb{Z}/m_j)^{\times}$ is odd for all $j$.

\item[\noindent$(3)$] In case that $r_j$ is even for all $j$, assume that the following conditions hold{\rm :}

\textup{(i)} ${\rm gcd}(m_j,m_{j^{\prime}})\leq 2$ for $j \neq j^{\prime}$,

\textup{(ii)} $\T(X_{m_j}^{r_j}/L,r_j/2)$ is true for all $j$ and for a sufficiently large finite extension $L$ of $k$. 

\item[\noindent$(4)$] In general case, assume that the following conditions hold{\rm :}

\textup{(i)} ${\rm gcd}(m_j,m_{j^{\prime}})\leq 2$ for $j \neq j^{\prime}$,

\textup{(ii)} for $j$ with odd $r_j$, the order of $p$ in the group $(\mb{Z}/m_j)^{\times}$ is odd,

\textup{(iii)} for $j$ with even $r_j$, $\T(X_{m_j}^{r_j}/L,r_j/2)$ is true for a sufficiently large finite extension $L$ of $k$. 
\end{itemize}
\end{thm}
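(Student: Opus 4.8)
The plan is to convert $\T(X/k,i)$, using Deligne's bounds from the Weil conjectures together with the K\"unneth formula, into the assertion that every geometric Frobenius fixed vector in $H^{2i}(\overline X,\mb Q_\ell(i))$ is algebraic, and then to analyse such vectors through the Jacobi sum description of the cohomology of the factors. The first reduction is to a large base field: since $\mathrm{cor}_{L/k}\circ\mathrm{res}_{L/k}$ is multiplication by $[L:k]$ and $\mathrm{cor}_{L/k}$ is induced by proper pushforward of cycles, surjectivity of $\rho_X^i$ over $k$ follows from surjectivity over any finite extension $L/k$, so I may replace $k$ by any $L$ I wish. By Deligne's theorem $H^{2i-j}(\overline X,\mb Q_\ell(i))^{G_k}=0$ for $j\ne0$, and $H^{2i}(\overline X,\mb Q_\ell(i))^{G_k}$ is the generalized eigenspace of Frobenius $\Phi$ for the eigenvalue $1$, which over a large enough $L$ is the honest fixed space $H^{2i}(\overline X,\mb Q_\ell(i))^{\Phi}$. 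So, after a suitable base change, it suffices to show that every class in $H^{2i}(\overline X,\mb Q_\ell(i))^{\Phi}$ is in the image of the cycle class map; we may also assume $q=\#L$ is a perfect square.

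Next I would use $H^*(\overline X,\mb Q_\ell)=\bigotimes_j H^*(\overline{X_{m_j}^{r_j}},\mb Q_\ell)$ and the fact that $H^c(\overline{X_{m_j}^{r_j}},\mb Q_\ell)$ is spanned by a power of the hyperplane class — hence is algebraic — unless $c=r_j$. Since a tensor product of algebraic classes, one from each factor, is the class of a product cycle, this reduces the problem to showing: for every subset $S\subseteq\{1,\dots,d\}$ with $\sum_{j\in S}r_j$ even, every $\Phi$-fixed vector in $\bigl(\bigotimes_{j\in S}H^{r_j}_{\mathrm{prim}}(\overline{X_{m_j}^{r_j}},\mb Q_\ell)\bigr)\bigl(\tfrac12\sum_{j\in S}r_j\bigr)$ is algebraic. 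Now each $H^{r_j}_{\mathrm{prim}}$ decomposes under the evident $\mu_{m_j}^{r_j+2}/\Delta$-action into one-dimensional eigenlines $V_j(\mathbf a_j)$, on which a suitable power of $\Phi$ acts through a Jacobi sum $J_j(\mathbf a_j)$ of absolute value $q^{r_j/2}$ in every complex embedding; a $\Phi$-fixed vector in the tensor product over $S$ is (the orbit sum of) a tensor $\bigotimes_{j\in S}v_j$, $v_j\in V_j(\mathbf a_j)$, with $\prod_{j\in S}J_j(\mathbf a_j)$ equal to a root of unity times $q^{\frac12\sum_{j\in S}r_j}$.

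The heart of the matter is the classification of such tuples. When all $\gcd(m_j,m_{j'})\le2$ (cases (2)--(4)), the fields $\mb Q(\zeta_{m_j})$, $j\in S$, are linearly disjoint over $\mb Q$; substituting $J_l(\mathbf a_l)=q^{\frac12\sum r_j}\zeta\big/\prod_{l'\ne l}J_{l'}(\mathbf a_{l'})$ and using this disjointness forces each $J_l(\mathbf a_l)$ to be a root of unity times a rational number, hence (by the absolute value) a root of unity times $q^{r_l/2}$; that is, $\mathbf a_l$ is already a Tate direction of the single variety $X_{m_l}^{r_l}$. For $r_l$ odd with $f_l:=\mathrm{ord}_{(\mb Z/m_l)^\times}(p)$ odd, the Stickelberger factorization of $J_l(\mathbf a_l)$ rules this out (it would require $f_l r_l$ even); for $r_l$ even, the assumed $\T(X_{m_l}^{r_l}/L,r_l/2)$ makes $V_l(\mathbf a_l)$ algebraic. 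Hence in these cases the $\Phi$-fixed vectors are products of hyperplane classes and of Tate classes of the even-dimensional factors, and $\T(X/k,i)$ follows. In case (1), where every $r_j=1$ and large gcds are allowed, the same disjointness argument separates off every index of $S$ coprime (up to $2$) to all the others in $S$: under (a) the ``large gcd graph'' has all vertices of degree $\le1$, so its restriction to $S$ is a matching, while under (b) every even subset of size $\ge4$ contains an index that is independent within the subset and of odd order, which by Stickelberger cannot occur, so only $|S|\le2$ survives. One is thereby reduced to (i) matched pairs $\{j,j'\}$, for which the $\Phi$-fixed vectors in $H^1(\overline{V_{m_j}^1})\otimes H^1(\overline{V_{m_{j'}}^1})$ are algebraic by Tate's theorem for products of curves \cite{Ta2}, and (ii) sets of indices whose Jacobi sums are individually $\pm q^{1/2}\zeta_l$, i.e.\ whose Jacobian is supersingular; transporting along the algebraic correspondences that realize $\mathrm{Jac}(V_{m_l}^1)\sim E^{g_l}$ (with $E$ a fixed supersingular elliptic curve) reduces the latter to the Tate conjecture for products of elliptic curves, due to Spiess \cite{Sp}. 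Finally, under (b) the groups $\CH^i(X)$ are generated rationally by these explicit cycles, and since $X$ is a product of curves it is of Abelian type, hence now lies in $\ml B_{tate}(k)$; Kahn's theorem then gives the agreement of rational and numerical equivalence on $X$, and combining this with the finite generation of the Chow groups of the curves $V_{m_j}^1$ over a finite field yields the asserted splitting of $\CH^i(X)$.

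The step I expect to be the main obstacle lies inside this classification, at two points: the arithmetic fact that $f_l$ odd forces $X_{m_l}^{r_l}$ to have no Tate direction when $r_l$ is odd, which needs the Stickelberger description of the prime factorization of Jacobi sums (equivalently, a supersingularity criterion); and, in case (1)(a), the combinatorial bookkeeping needed to check that, once the large-gcd matching and the supersingular indices have been split off, every remaining $\Phi$-fixed vector genuinely factors into pieces covered by the results of Tate and Spiess. The transfer reduction, the K\"unneth decomposition, and the hyperplane reductions are routine.
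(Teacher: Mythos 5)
Your overall strategy is the same as the paper's: reduce to a large base field by transfer, use K\"unneth together with the fact that $H^c(\overline{X_m^r},\mb{Q}_\ell)$ is algebraic and one-dimensional for $c\neq r$ even (and zero for $c\neq r$ odd), describe the middle-degree eigenvalues by Jacobi sums, exploit the linear disjointness of the fields $\mb{Q}(\zeta_{m_j})$, and rule out middle-degree contributions of odd-dimensional factors with odd $f_j$ by the Stickelberger/valuation criterion (the paper's Lemmas \ref{l1}--\ref{l4}); cases (2)--(4) go through as you sketch. The genuine problem is in case (1)(a), at your ``singleton'' indices. From $\alpha_1\cdots\alpha_{2i}=q^i$ and disjointness you only learn that the \emph{particular} eigenvalue $\alpha_l$ occurring in the fixed tensor satisfies $\alpha_l^{2N}=q^N$; this does not make $\mathrm{Jac}(X_{m_l}^1)$ supersingular, since the other Jacobi-sum eigenvalues of the same curve are unconstrained and can perfectly well be non-supersingular. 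Hence the isogeny $\mathrm{Jac}(X_{m_l}^1)\sim E^{g_l}$ you want to transport along need not exist, and the appeal to Spiess's theorem is unjustified as written. The repair is simpler than your detour and is exactly what the paper's Lemma \ref{l2} does: pair the singleton indices with each other arbitrarily; for each such pair the product of the two eigenvalues lies in $\mb{Q}$ by the same disjointness argument, hence equals $q$ up to a root of unity, so after one base change the whole fixed class is a tensor of Frobenius-invariant classes in $H^2$ of products of \emph{two} curves, each algebraic by Tate's theorem --- no supersingularity and no input from Spiess is needed. (One could instead project onto the $\mb{Q}[F]$-isotypic abelian subvariety, the projector being a polynomial in the Frobenius correspondence, but you would still have to argue that only that piece is supersingular and then treat it; this is strictly more work.)

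The second gap is the integral statement in (1)(b). That $\CH^i(X)$ is the direct sum of a free group of finite rank and a group of finite exponent is not a consequence of ``$\CH^i(X)\otimes\mb{Q}$ is spanned by explicit cycles'' together with Kahn's rational-equals-numerical equivalence and finite generation of the Chow groups of the individual curves: those inputs only control $\CH^i(X)\otimes\mb{Q}$, they bound neither the exponent of the torsion nor give a splitting, and Chow groups of a product do not decompose integrally along the factors. The paper obtains the integral statement from Soul\'e's theorem (Theorem \ref{ts}): your non-rationality argument for even subsets of size at least $4$ (which is the paper's Lemma \ref{l3}) verifies Soul\'e's condition $(*)_i$ for all $i$, and his Th\'eor\`eme 3 then yields simultaneously the bijectivity of the cycle map and the asserted structure of $\CH^i(X)$. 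You should route (1)(b) through that result rather than through Kahn. (A minor further slip: the $G_k$-invariants are the honest kernel of $F-1$, not the generalized eigenspace, and a unipotent part would not disappear after base change; this does not affect your argument since you only ever use fixed vectors, and Frobenius is in fact semisimple on the relevant cohomology here.)
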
 
\begin{rmk}
For a variety $X$ in (1)--(4) of Theorem \ref{t}, the vector space $H^{2i}(\overline{X},\mb{Q}_{\ell}(i))^{G_k}$ does not vanish. Moreover, if $X$ is a variety as in (2)--(4), then the dimension of $H^{2i}(\overline{X},\mb{Q}_{\ell}(i))^{G_k}$ over $\mb{Q}_{\ell}$ can be described. For example, let $X=X_{m_1}^{r_1}\times\cdots\times X_{m_d}^{r_d}$ be as in (2). Then we have
\begin{align*}
\dim_{\mb{Q}_{\ell}}&H^{2i}(\overline{X},\mb{Q}_{\ell}(i))^{G_k}\\
&=\sharp\{(i_1,\dots,i_d)\;|\; i_1+\cdots+i_d=i \;\text{and}\; 0\le i_j\le {\rm min}\{r_j,i\}\; \text{for all $j$}\}
\end{align*}
where $\sharp $ denotes the cardinality of a finite set. The above equation follows from the proof of Theorem \ref{t} (see \S 3.1.).
\end{rmk}

Our proof of Theorem \ref{t} is based on a combinatorial property of eigenvalues of Frobenius acting on $\ell$-adic \'etale cohomology of $X_m^r$. We will use an argument similar to that Spiess used in \cite{Sp}. For (1)\,(b), we use a result of Soul\'e \cite[Th\'eor\`eme 3 i)]{So}.

From Theorem \ref{t}\,(3) and the fact that the Tate conjecture holds for $X_m^2/k$, we obtain the following:
\begin{cor}\label{cf} Let the notation be as in Theorem \ref{t}. Assume that $(m_j,m_{j^{'}})\leq 2$ for $j \neq j^{'}$. Then the Tate conjecture holds for $X_{m_1}^2\times X_{m_2}^2\times \cdots \times X_{m_d}^2/k$.
\end{cor}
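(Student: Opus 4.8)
The plan is to obtain Corollary~\ref{cf} as the special case of Theorem~\ref{t}\,(3) in which $r_1=r_2=\cdots=r_d=2$. Since every $r_j=2$ is even we are in the situation of part (3), and hypothesis (i) of (3) — namely $\gcd(m_j,m_{j'})\le 2$ for $j\ne j'$ — is precisely the assumption of the corollary. Hence the whole content of the corollary is the verification of hypothesis (ii) of (3): that the Tate conjecture $\T(X_{m_j}^2/L,1)$ for divisors holds, for every $j$, over a sufficiently large finite extension $L$ of $k$.

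To check (ii) I would first reduce each Delsarte surface $X_{m_j}^2=X_{m_j}^2(\mathbf{a}_j)$ to a Fermat surface by base change. Enlarging $L$ if necessary, we may assume that for every $j$ and every $i$ the coefficient $a_{j,i}$ has an $m_j$-th root $b_{j,i}\in L^{\times}$; then for each $j$ the linear substitution $y_i=b_{j,i}x_i$ defines an isomorphism $X_{m_j}^2\times_k L\xrightarrow{\sim}V_{m_j}^2\times_k L$. Thus $\T(X_{m_j}^2/L,1)$ is equivalent to $\T(V_{m_j}^2/L,1)$, and it suffices to know that the Tate conjecture for divisors holds for Fermat surfaces over finite fields — this is the classical input referred to in the statement of the corollary.

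For completeness, recall why the Fermat-surface case holds. The inductive structure of Fermat varieties of Shioda--Katsura \cite{SK,Sh} provides, over any field containing the $m$-th roots of unity, an idempotent algebraic correspondence between $V_m^2$ and the product $V_m^1\times V_m^1$ of two Fermat curves, compatible with the Galois action and with cycle class maps, which exhibits the primitive part of $H^2(\overline{V_m^2},\mb{Q}_{\ell}(1))$ as a direct summand of the $G$-module $\bigl(H^1(\overline{V_m^1},\mb{Q}_{\ell})\otimes H^1(\overline{V_m^1},\mb{Q}_{\ell})\bigr)(1)$, that is, of the K\"unneth $(1,1)$-part of $H^2(\overline{V_m^1\times V_m^1},\mb{Q}_{\ell}(1))$. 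Since $\T(\,\cdot\,/L,1)$ holds for products of curves by Tate \cite{Ta2}, every class in $H^2(\overline{V_m^1\times V_m^1},\mb{Q}_{\ell}(1))^{G_L}$ is algebraic; transporting the classes in the $(1,1)$-part through the correspondence shows that the primitive part of $H^2(\overline{V_m^2},\mb{Q}_{\ell}(1))^{G_L}$ is algebraic, while the remaining non-primitive part is generated by the hyperplane class. Hence $\T(V_m^2/L,1)$, and therefore $\T(X_{m_j}^2/L,1)$, holds over all sufficiently large $L$, which is exactly hypothesis (3)(ii). (In particular $H^{2i}(\overline{X},\mb{Q}_{\ell}(i))^{G_k}$ is nonzero, consistently with the Remark above.)

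There is no genuine obstacle here: all the substance is carried by Theorem~\ref{t}\,(3) together with the known Tate conjecture for Fermat surfaces. The only points deserving a little care are (a) that the finite extension $L$ demanded by (3)(ii) may be taken as large as one wishes — which is harmless, since both the change of variables above and the Shioda--Katsura correspondence are available over every sufficiently large finite field — and (b) that the correspondence transports algebraic cycles in the required direction, from $V_m^1\times V_m^1$ to $V_m^2$; this is automatic once the correspondence is presented as an honest algebraic cycle class.
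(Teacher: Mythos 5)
Your proposal is correct and follows essentially the same route as the paper: Corollary~\ref{cf} is deduced from Theorem~\ref{t}\,(3) with all $r_j=2$, where hypothesis (i) is the stated assumption and hypothesis (ii) is supplied by the known Tate conjecture for the diagonal surfaces $X_{m_j}^2$ (after the harmless base change making them Fermat surfaces). Your expanded justification of the surface case via the Shioda--Katsura inductive structure relating $V_m^2$ to $V_m^1\times V_m^1$ and Tate's theorem for products of curves is exactly the classical input the paper cites, and it parallels the alternative argument the paper gives in \S 3.2.
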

In case that all coefficients are same, that is, in case that $X_{m_j}^{r_j}=V_{m_j}^{r_j}$ for all $j$, this corollary also follows from Theorem \ref{t}(1) and ``inductive structure" for Fermat varieties given by Shioda-Katsura \cite{SK} (cf. \S 3.2).

From Theorem \ref{t}, we see that the varieties as in Theorem \ref{t} belong to $B_{tate}(k)$, and hence we obtain the following (cf. Corollary \ref{ca}):
\begin{cor}\label{ct} Let $X$ be a variety as in $(1)$\textup{--}$(4)$ of Theorem \ref{t}. Then the following holds{\rm :}

$(1)$ Conjecture \ref{cj} holds for $X$,

$(2)$ The Lichtenbaum conjecture holds for $X$.

Furthermore if $X$ is a variety as in Theorem \ref{t}$\,(1)\,(b)$, then the following holds{\rm :}

\textup{(4)} $\CH^2(X)$ is finitely generated.

\textup{(5)} $K_0(X)$ is the direct sum of a free abelian group of finite rank and a group of finite exponent.
\end{cor}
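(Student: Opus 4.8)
The plan is to deduce all four assertions from the single fact that each variety $X$ in (1)--(4) of Theorem \ref{t} lies in Kahn's class $\ml{B}_{tate}(k)$, together with the structural information already recorded in Theorem \ref{t}\,(1)\,(b). So the first step is to establish membership $X\in\ml{B}_{tate}(k)$, i.e.\ that $X$ is of Abelian type \emph{and} that the Tate conjecture holds for it. The Tate conjecture for all $i$ is exactly the content of Theorem \ref{t}. For Abelian type: over a finite extension $L/k$ large enough that each coefficient of ${\bf a}_j$ becomes an $m_j$-th power, the rescaling $x_i\mapsto a_i^{-1/m_j}x_i$ identifies $X_{m_j}^{r_j}\times_k L$ with the Fermat variety $V_{m_j}^{r_j}$; since Fermat varieties are of Abelian type, the twists are governed by the Artin motive of $L/k$, and the generating subcategory of Chow motives is closed under finite base change and under tensor product, the product $X$ is again of Abelian type. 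Hence $X\in\ml{B}_{tate}(k)$.

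Granting this, parts $(1)$ and $(2)$ are immediate from the results quoted in the Introduction. For $(1)$, the combination of Geisser \cite{Ge} and Kahn \cite{Ka} (via Kimura \cite{Ki}) shows that Conjecture \ref{cj} holds for every variety in $\ml{B}_{tate}(k)$; this is precisely Corollary \ref{ca}. For $(2)$, Kahn's analysis of zeta functions of varieties of Abelian type satisfying the Tate conjecture yields the special-value formula for $\zeta_X$, so the Lichtenbaum conjecture holds for $X\in\ml{B}_{tate}(k)$.

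For the finer assertions $(4)$ and $(5)$, which apply when $X$ is as in Theorem \ref{t}\,(1)\,(b), observe that such an $X$ is a product of curves $X_{m_1}^1\times\cdots\times X_{m_d}^1$, so the structural results of Soul\'e \cite{So} are directly available. Write $\ml{C}$ for the class of abelian groups that are a direct sum of a free group of finite rank and a group of finite exponent; Theorem \ref{t}\,(1)\,(b) gives $\CH^i(X)\in\ml{C}$ for all $i$. For $(4)$ I would upgrade ``finite exponent'' to ``finite'' in codimension $2$: the torsion subgroup $\CH^2(X)_{\mathrm{tors}}$ has finite exponent $n$, hence decomposes as the finite sum of its $\ell$-primary parts over the primes $\ell\mid n$, and each $\CH^2(X)\{\ell\}$ is finite for a smooth projective variety over a finite field (for the products of curves at hand this finiteness is contained in \cite{So}). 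Thus $\CH^2(X)_{\mathrm{tors}}$ is finite and $\CH^2(X)$ is finitely generated. For $(5)$ I would transport the $\ml{C}$-structure from Chow groups to $K_0(X)$ along the topological (resp.\ $\gamma$-)filtration: the filtration is finite since $0\le i\le\dim X$, and the $i$-th graded piece maps to $\CH^i(X)$ with kernel and cokernel of bounded exponent. A direct check shows $\ml{C}$ is stable under subgroups and under extensions---the torsion-free quotient of a $\ml{C}$-group injects into a free group of finite rank, hence is itself free of finite rank, so the relevant extensions split---whence each graded piece, and therefore $K_0(X)$ itself, lies in $\ml{C}$.

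The genuinely load-bearing points, rather than serious obstacles, are two. First, the stability of ``Abelian type'' under finite base change and products must be stated carefully, since the twists $X_{m_j}^{r_j}$ are only geometrically Fermat. Second, and more delicate, is the passage from the bounded-exponent statement of Theorem \ref{t}\,(1)\,(b) to honest finiteness in $(4)$: this rests on the finiteness of codimension-two torsion over finite fields and not on the Tate conjecture alone. The bookkeeping for $(5)$---verifying that the comparison maps between the graded pieces of the $K_0$-filtration and the Chow groups have kernel and cokernel of bounded exponent, and that $\ml{C}$ is closed under the resulting operations---is the only place requiring sustained care, and it may alternatively be read off directly from Soul\'e's computations for products of curves in \cite{So}.
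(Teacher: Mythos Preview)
Your proposal is correct and follows the paper's own line: membership in $\ml{B}_{tate}(k)$ via Theorem~\ref{t} plus Geisser--Kahn for (1) and (2); the bounded-exponent statement of Theorem~\ref{t}\,(1)\,(b) upgraded via finiteness of $\CH^2(X)_{\mathrm{tors}}$ for (4); and transport of the class $\ml{C}$ from Chow groups to $K_0$ for (5). Two small deviations are worth flagging. For (4), the paper invokes the Colliot-Th\'el\`ene--Sansuc--Soul\'e theorem directly (stated immediately after Corollary~\ref{ct}) rather than decomposing into $\ell$-primary pieces; this is the cleaner route and is the standard reference for that finiteness. For (5), the paper uses the Friedlander--Suslin spectral sequence $E_2^{p,q}=\CH^{-q}(X,-p-q)\Rightarrow K_{-p-q}(X)$, whereas you use the $\gamma$-filtration and the classical bounded-denominator comparison $\mathrm{gr}^i_\gamma K_0(X)\leftrightarrow \CH^i(X)$; both arguments reduce to the same closure properties of $\ml{C}$ that you verify, and your version has the mild advantage of not needing the motivic spectral sequence. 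Your extra care about why the twisted hypersurfaces $X_{m_j}^{r_j}$ are of Abelian type is more than the paper supplies---it simply asserts this---but the point is sound.
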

Here the Lichtenbaum conjecture is the conjecture stated in \cite[\S7]{Li}.\bigskip

Corollary \ref{ct}\,(1)\,(2) immediately follow from Theorem \ref{t} and results of Geisser \cite{Ge} and Kahn \cite{Ka} mentioned above. Corollary \ref{ct}\,(4)\,(5) follow from Theorem \ref{t}\,(1)\,(b), a spectral sequence from higher Chow groups to algebraic $K$-theory \cite{FS}
$$E_2^{p,q} =\CH^{-q}(X,-p-q) \Longrightarrow K_{-p-q}(X)$$
and the following theorem:  

\begin{thm}[Colliot-Th$\acute{\text{e}}$l$\grave{\text{e}}$ne$-$Sansuc$-$Soul$\acute{\text{e}}$ \cite{CSS}]
Let $X$ be a projective smooth and geometrically connected variety over a finite field, the subgroup $\CH^2(X)_{tor}$ of torsion elements of $\CH^2(X)$ is finite.
\end{thm}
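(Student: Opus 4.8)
The plan is to bound $\CH^2(X)_{tor}$ one prime at a time, using the Bloch--Ogus coniveau formalism together with the Merkurjev--Suslin theorem to control the torsion by \'etale cohomology, and then to exploit the one feature special to a finite base field --- Deligne's purity --- to upgrade ``cofinitely generated'' to ``finite''. Write $N=\#k$.

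\emph{Step 1: bridging $\CH^2$ and \'etale cohomology.} Fix a prime $\ell\neq p$ and $m\geq1$. Since $X$ is smooth, Bloch's formula gives $\CH^2(X)\simeq H^2_{Zar}(X,\ml{K}_2)$, and the Gersten resolution shows $H^q_{Zar}(X,\ml{K}_2)=0$ for $q>2$. Using the Merkurjev--Suslin isomorphism $\ml{K}_2/\ell^m\simeq\ml{H}^2(\mu_{\ell^m}^{\otimes2})$ of Zariski sheaves (the weight-two Bloch--Kato statement, at the sheaf level), the Bloch--Ogus vanishing $H^a_{Zar}(X,\ml{H}^b(\mu_{\ell^m}^{\otimes2}))=0$ for $a>b$, and the exact sequences relating $\ml{K}_2$ to $\ml{K}_2/\ell^m$, one chases the coniveau spectral sequence $E_2^{a,b}=H^a_{Zar}(X,\ml{H}^b(\mu_{\ell^m}^{\otimes2}))\Rightarrow H^{a+b}_{et}(X,\mu_{\ell^m}^{\otimes2})$ and finds that $\CH^2(X)[\ell^m]$ is a subquotient of $H^3_{et}(X,\mu_{\ell^m}^{\otimes2})$. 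Passing to the direct limit over $m$ (\'etale cohomology commutes with filtered colimits of torsion sheaves), $\CH^2(X)\{\ell\}$ is a subquotient of $H^3_{et}(X,\mb{Q}_\ell/\mb{Z}_\ell(2))$. This step is pure homological algebra valid over any field, and is the content of \cite{CSS}.

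\emph{Step 2: the finite-field input.} It now suffices to show that $H^3_{et}(X,\mb{Q}_\ell/\mb{Z}_\ell(2))$ is finite, and zero for all but finitely many $\ell$. Since $\mathrm{cd}_\ell(k)=1$, the Hochschild--Serre spectral sequence gives an exact sequence
\begin{equation*}
0\to H^1\big(k,H^2(\overline X,\mb{Q}_\ell/\mb{Z}_\ell(2))\big)\to H^3_{et}(X,\mb{Q}_\ell/\mb{Z}_\ell(2))\to H^3(\overline X,\mb{Q}_\ell/\mb{Z}_\ell(2))^{G_k}\to0 .
\end{equation*}
Each $H^q(\overline X,\mb{Q}_\ell/\mb{Z}_\ell(2))$ is cofinitely generated over $\mb{Z}_\ell$, with divisible part $(\mb{Q}_\ell/\mb{Z}_\ell)^{b_q}$ on which the geometric Frobenius acts through its action on $H^q(\overline X,\mb{Q}_\ell(2))$. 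By Deligne's theorem the Frobenius eigenvalues on $H^q(\overline X,\mb{Q}_\ell(2))$ have absolute value $N^{q/2-2}$, which equals $N^{-1}$ for $q=2$ and $N^{-1/2}$ for $q=3$; in particular $\mathrm{Frob}-1$ is invertible on $H^2(\overline X,\mb{Q}_\ell(2))$ and on $H^3(\overline X,\mb{Q}_\ell(2))$. Hence $H^1(k,H^2(\overline X,\mb{Q}_\ell/\mb{Z}_\ell(2)))$ (which is the group of Frobenius-coinvariants) and $H^3(\overline X,\mb{Q}_\ell/\mb{Z}_\ell(2))^{G_k}$ are both finite, so $H^3_{et}(X,\mb{Q}_\ell/\mb{Z}_\ell(2))$, and therefore $\CH^2(X)\{\ell\}$, is finite. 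Moreover, for all but finitely many $\ell$ the lattices $H^{*}(\overline X,\mb{Z}_\ell)$ are torsion-free (Gabber) and $\ell$ divides neither $P_2(N^2)$ nor $P_3(N^2)$, where $P_q\in\mb{Z}[T]$ is the $\ell$-independent characteristic polynomial of Frobenius on $H^q(\overline X,\mb{Q}_\ell)$; for such $\ell$, $\mathrm{Frob}-1$ is an automorphism of the relevant lattices, so $H^3_{et}(X,\mb{Q}_\ell/\mb{Z}_\ell(2))=0$ and $\CH^2(X)\{\ell\}=0$.

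\emph{Step 3: the prime $p$ and conclusion; main obstacle.} For the $p$-primary part one runs the same strategy with $\ml{K}_2/p^m$, which by the Bloch--Kato--Gabber theorem is the logarithmic de Rham--Witt sheaf $W_m\Omega^2_{X,\log}$; this bounds $\CH^2(X)\{p\}$ by cohomology built from $W_m\Omega^2_{X,\log}$ over $k$, whose finiteness (and triviality for almost all components) follows from a weight argument in coherent/crystalline cohomology, again as in \cite{CSS}. Finally $\CH^2(X)_{tor}=\bigoplus_\ell\CH^2(X)\{\ell\}$ is a direct sum of finite groups, all but finitely many of which vanish, hence finite. The main obstacle is Step 1: it rests on the full force of the Merkurjev--Suslin theorem and of the Bloch--Ogus/Gersten package (in particular the vanishing $H^a_{Zar}(X,\ml{H}^b)=0$ for $a>b$) together with the precise identification of $\CH^2(X)[\ell^m]$ inside the coniveau filtration. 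Granting that, Step 2 is elementary, but it is exactly there that \emph{genuine} finiteness --- rather than mere cofinite generation --- is produced, out of the fact that Frobenius has no eigenvalue $1$ on $H^2(\overline X,\mb{Q}_\ell(2))$ or $H^3(\overline X,\mb{Q}_\ell(2))$, the same weight phenomenon that drives the rest of this paper; the $p$-primary part is the second delicate point, since Merkurjev--Suslin must there be replaced by its de Rham--Witt analogue.
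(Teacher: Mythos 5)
This statement is not proved in the paper at all: it is quoted as an external input and attributed to \cite{CSS}, so there is no internal argument to compare with. Your sketch is, in substance, the standard proof from that reference: Merkurjev--Suslin plus the Bloch--Ogus/Gersten machinery identifies $\CH^2(X)\{\ell\}$ as a quotient of a subgroup of $H^3_{et}(X,\mb{Q}_\ell/\mb{Z}_\ell(2))$ (the precise output is the exact sequence $0\to H^1_{Zar}(X,\ml{K}_2)\otimes\mb{Q}_\ell/\mb{Z}_\ell\to N H^3(X,\mb{Q}_\ell/\mb{Z}_\ell(2))\to \CH^2(X)\{\ell\}\to 0$, which justifies your ``subquotient''), and then the weight argument via Hochschild--Serre over the finite field, together with Gabber's torsion-freeness and the $\ell$-independence of the characteristic polynomials, gives finiteness for each $\ell$ and vanishing for almost all $\ell$. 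Two small caveats: in Step 2 you should note that the Frobenius coinvariants of the divisible part vanish (rather than merely being finite), the finite contribution coming from the cotorsion; and in Step 3 the attribution ``again as in \cite{CSS}'' is historically inaccurate --- the $p$-primary part is not handled by the 1983 CSS argument, since it needs the Bloch--Kato--Gabber isomorphism $\ml{K}_2/p^m\simeq W_m\Omega^2_{\log}$ and the Gersten conjecture for logarithmic Hodge--Witt sheaves (Gros--Suwa), which postdate it; the method you describe is nonetheless the accepted route, and the slope/weight argument in crystalline cohomology completes it. So your proposal is a correct reconstruction of the cited proof, modulo these attributions and the level of detail in the coniveau chase.
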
\bigskip

This paper is organized as follows: in the next section (\S 2), we prove some lemmas about eigenvalues of Frobenius acting on $\ell$-adic \'etale cohomology. In \S 3, we prove Theorem \ref{t} using lemmas in \S2. In \S 3.1, we calculate the dimension of $\ell$-adic \'etale cohomology for varieties of Theorem \ref{t}. In \S 3.2, we give an alternative proof of Corollary \ref{cf} using ``inductive structure" for Fermat varieties. In the last section (\S 4), we give an application to a zeta value of products of four curves using a result of Kohmoto \cite{Ko}.

\subsection*{Notation}
For a field $k$, $k^{\times }$ denotes the multiplicative group. For an integer $m>1$, $\mb{Z}/m$ denotes the cokernel of the map $\mb{Z} \stackrel{\times m}{\rightarrow}\mb{Z}$ and $(\mb{Z}/m)^{\times }$ denotes the group of invertible elements in $\mb{Z}/m$.

Unless indicated otherwise, all cohomology groups of schemes are taken for the $\acute {\text{e}}$tale topology. Let $X$ be a scheme. For a prime number $\ell$ which is invertible on $X$ and integers $i,j\geq 0$, we define
\begin{align*}
H^i(X,\mb{Z}_{\ell}(j))&:=H^i_{\rm cont}(X,\mb{Z}_{\ell}(j)),\\
H^i(X,\mb{Q}_{\ell}(j))&:=H^i(X,\mb{Z}_{\ell}(j))\otimes _{\mb{Z}_{\ell}}\mb{Q}_{\ell}.
\end{align*} 
where $H^i_{\rm cont}(X,\mb{Z}_{\ell}(j))$ is the continuous $\ell$-adic \'etale cohomology defined by Jannsen \cite{Ja0}. We write $H^i(X,\mb{Q}_{\ell})$ for $H^i(X,\mb{Q}_{\ell}(0))$. For a scheme $X$ and a positive integer $m$ which is invertible on $X$, we write $\mu _m$ for the $\acute {\text{e}}$tale sheaf of $m$-th roots of unity on $X$. For all schemes $X$ considering in this paper, we have
\begin{align*}
H^i(X,\mb{Z}_{\ell}(j))&=\varprojlim _n H^i(X,\mu_{\ell^n}^{\otimes j}).
\end{align*} 

\section{Lemmas}
In this section, we prove some lemmas about eigenvalues of Frobenius acting on $\ell$- adic \'etale cohomology, which is used in the proof of Theorem \ref{t}.

Let $k$ be a finite field $\mb{F}_q$. We consider a variety $X_m^r$ over $k$. Let $F \in G_k$ be the geometric Frobenius. In this paper, we call an eigenvalue $\alpha$ of $F$ acting on $H^{i}(\overline{X_m^r},\mb{Q}_{\ell})$ \textit{Weil number} of weight $i$ for $X_m^r/k$. By Deligne's theorem (Weil conjecture) \cite{De}, $\alpha$ is an algebraic integer and the absolute value $|\alpha|$ of $\alpha$ is equal to $q^{i/2}$.
We have known that for any $i\neq r$, $0\leq i \leq 2r$, 
\begin{eqnarray}\label{c}
H^{i}(\overline{X_m^r},\mb{Q}_{\ell})=\left\{ \begin{array}{ll}
0 & \text{for $i$ odd} \\
\text{1-dimensional} & \text{for $i$ even}\\
\end{array} \right.
\end{eqnarray}
For this fact, we refer to \cite[Expose VII, Corollaire 7.5.]{SGA5}. In case $i$ is even, $H^{i}(\overline{X_m^r},\mb{Q}_{\ell})$ is generated by algebraic cycles, that is, the Tate conjecture is true. From \eqref{c} we see that a Weil number for $X_m^r/k$ of even weight $i \neq r$ is $q^{i/2}$, and that we have no Weil number for $X_m^r/k$ of odd weight $i \neq r$. 

Now assume that $q=p^f$ is the least power of $p$ such that $q \equiv 1 \mod m$. We then recall Weil's result on describing Weil numbers of weight $r$ for $X_m^r/k$ in terms of Jacobi sums \cite{We}. A Weil number $\alpha$ of weight $r$ for $X_m^r/k$ is given by
$$\alpha=(-1)^r\bar{\chi}(a_0)^{\gamma_0}\cdots\bar{\chi}(a_{r+1})^{\gamma_{r+1}}j(\gamma).$$
Here $\chi$ is a fixed character of order $m$ of $k^{\times}$ and $j(\gamma)$ is the Jacobi sum:

$$j(\gamma)=\sum_{\stackrel{1+v_1+\cdots+v_{r+1}=0}{ \ v_i \in k^{\times}}}\chi(v_1)^{\gamma_1}\cdots\chi(v_{r+1})^{\gamma_{r+1}},$$
where $\gamma$ is an element of the set
$$\mf{D}_{m,r}=\bigl\{ (\gamma_0,\dots,\gamma_{r+1}) \ | \ \gamma_i \in \mb{Z}/m, \ \gamma_i \not \equiv 0, \ \gamma_0+\gamma_1+\cdots+\gamma_{r+1}\equiv 0\bigr\}.$$
Conversely, for any $\gamma \in \mf{D}_{m,r}$, the number $(-1)^r\bar{\chi}(a_0)^{\gamma_0}\cdots\bar{\chi}(a_{r+1})^{\gamma_{r+1}}j(\gamma)$ is a Weil number of weight of $r$ for $X_m^r/k$.

\begin{lem}\label{l1} Let $k$ be a finite field $\mb{F}_q$. Let $\alpha$ be a Weil number of weight $r$ for $X_m^r/k$. Then there is an integer $e>0$ such that  $\alpha^e$ belongs to $\mb{Q}(\zeta_m)$, where $\zeta_m$ is a primitive $m$-th root of unity.
\end{lem}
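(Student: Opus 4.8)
The plan is to pass to a finite extension of $k$ over which the Jacobi-sum description of Weil numbers recalled above becomes available, and then simply to observe that that description produces elements of $\mb{Q}(\zeta_m)$.

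Since $m$ is prime to $p$, the class of $q$ in $(\mb{Z}/m)^{\times}$ has a finite order $N$, so that $m\mid q^{N}-1$. Let $k'=\mb{F}_{q^{N}}\subset\overline{k}$ be the degree-$N$ extension of $k$ and set $X'=X_m^r\times_k k'$. Then $\overline{X'}=\overline{X_m^r}$, and the geometric Frobenius of $k'$ acts on $H^{r}(\overline{X_m^r},\mb{Q}_{\ell})$ as $F^{N}$; hence $\alpha^{N}$ is a Weil number of weight $r$ for $X'/k'$. It therefore suffices to prove the lemma in the case $m\mid q-1$ with $e=1$, the general case following with $e=N$ after raising to the $N$-th power.

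Assume $m\mid q-1$. Then $k^{\times}$ carries a character $\chi$ of order exactly $m$, and by Weil's result \cite{We} we have
$$\alpha=(-1)^{r}\,\bar{\chi}(a_0)^{\gamma_0}\cdots\bar{\chi}(a_{r+1})^{\gamma_{r+1}}\,j(\gamma)$$
for a suitable $\gamma\in\mf{D}_{m,r}$, where $j(\gamma)$ is the associated Jacobi sum. Every value of $\chi$ is an $m$-th root of unity, so each factor $\bar{\chi}(a_i)^{\gamma_i}$ lies in $\mu_m\subset\mb{Q}(\zeta_m)$, while $j(\gamma)$ is a finite sum of products of such values and hence lies in $\mb{Z}[\zeta_m]\subset\mb{Q}(\zeta_m)$. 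Therefore $\alpha\in\mb{Q}(\zeta_m)$, which completes the argument.

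The only step deserving comment is the reduction: when $m\nmid q-1$ the Weil number $\alpha$ itself typically fails to lie in $\mb{Q}(\zeta_m)$, precisely because $k$ does not contain $\mu_m$ and $G_k$ then acts nontrivially on the $m$-th roots of unity entering the cohomology of $X_m^r$; enlarging $k$ to $k(\mu_m)=k'$ removes this obstruction at the harmless cost of replacing $\alpha$ by $\alpha^{N}$. Once the base field contains $\mu_m$, nothing remains but to read off that Weil's formula is assembled entirely out of $m$-th roots of unity.
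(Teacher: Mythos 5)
Your proof is correct and follows essentially the same route as the paper: pass to a finite extension $\mb{F}_{q^e}$ with $q^e\equiv 1 \bmod m$, note that $\alpha^e$ is a Weil number for the base-changed variety, and read off from Weil's Jacobi-sum description that it lies in $\mb{Q}(\zeta_m)$. The only cosmetic difference is that the paper, having stated Weil's formula over the minimal field $\mb{F}_{p^f}$, observes that the Weil number over the larger field is a power of that expression, whereas you invoke Weil's formula directly over the enlarged field; both are fine.
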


\begin{proof}
Let $L:=\mb{F}_{q^e}$ be a finite extension of $k$ such that $q^e\equiv 1 \mod m$. Then $\alpha ^e$ is a Weil number of weight $r$ for $X_m^r/L$. By the above Weil's result,  $\alpha^e$ is a power of
$(-1)^r\bar{\chi}(a_0)^{\gamma_0}\cdots\bar{\chi}(a_{r+1})^{\gamma_{r+1}}j(\gamma)$,
and therefore belongs to $\mb{Q}(\zeta_m)$.
\end{proof}

\begin{lem}[Shioda-Katsura \text{\cite[Lemma 3.1.]{SK}}]\label{lsk} Let $f$ be the residue degree of $p$ in $\mb{Q}(\zeta_m)$. Let $\alpha$ be a Weil number of weight $r$ for $X_m^r/\mb{F}_{p^f}$. Then  the following statements are equivalent{\rm :}

$(i)$ some power of $\alpha$ is a power of $p${\rm ;}

$(ii)$ the valuations $v_{\mf{p}}(\alpha)$ of $\alpha$ are independent of a prime $\mf{p}$ in $\mb{Q}(\zeta_m)$ dividing $p$, and equal to $\frac{fr}{2}$.
\end{lem}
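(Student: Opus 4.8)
The plan is to translate both conditions into statements about the ideal $(\alpha)$ of $\mb{Z}[\zeta]$, where I write $\zeta=\zeta_m$, and then finish with Kronecker's theorem. Two preliminary remarks set the stage. Since $p\nmid m$, the prime $p$ is unramified in $\mb{Q}(\zeta)$, so $v_{\mf{p}}(p)=1$ for every prime $\mf{p}\mid p$. And since $f$ is the order of $p$ in $(\mb{Z}/m)^{\times}$, we have $q=p^{f}\equiv 1\bmod m$, so Weil's explicit description applies verbatim: $\alpha=(-1)^{r}\bar\chi(a_0)^{\gamma_0}\cdots\bar\chi(a_{r+1})^{\gamma_{r+1}}\,j(\gamma)$ for some $\gamma\in\mf{D}_{m,r}$; in particular $\alpha\in\mb{Z}[\zeta]$.

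I would next record two facts. (a) By Deligne's theorem $\alpha\bar\alpha=|\alpha|^{2}=q^{r}=p^{fr}$ as algebraic integers, hence on ideals $(\alpha)(\bar\alpha)=(p)^{fr}$; so $(\alpha)$ is supported on the primes above $p$ and $(\alpha)=\prod_{\mf{p}\mid p}\mf{p}^{\,v_{\mf{p}}(\alpha)}$. (b) Every embedding $\sigma\colon\mb{Q}(\zeta)\hookrightarrow\mb{C}$ is of the form $\sigma_{c}\colon\zeta\mapsto\zeta^{c}$ with $c\in(\mb{Z}/m)^{\times}$, and $\sigma_{c}(\alpha)$ is again a Weil number of weight $r$ for $X_{m}^{r}/\mb{F}_{q}$ --- since $\sigma_{c}$ raises each root of unity $\bar\chi(a_i)^{\gamma_i}$ to the $c$-th power and sends $j(\gamma)$ to $j(c\gamma)$ with $c\gamma\in\mf{D}_{m,r}$, so this is the Weil number attached to $c\gamma$; consequently $|\sigma(\alpha)|=q^{r/2}$ for every embedding $\sigma$.

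The two implications are then short. For $(i)\Rightarrow(ii)$: if $\alpha^{e}=p^{n}$ with $e\geq 1$, then $e\,v_{\mf{p}}(\alpha)=v_{\mf{p}}(p^{n})=n$ for each $\mf{p}\mid p$, so $v_{\mf{p}}(\alpha)=n/e$ is independent of $\mf{p}$, and comparing absolute values $p^{efr/2}=|\alpha|^{e}=p^{n}$ forces $n/e=fr/2$. For $(ii)\Rightarrow(i)$: assume $v_{\mf{p}}(\alpha)=fr/2$ for all $\mf{p}\mid p$; then fact (a) gives $(\alpha)^{2}=\prod_{\mf{p}\mid p}\mf{p}^{fr}=(p)^{fr}=(p^{fr})$, so $\alpha^{2}=u\,p^{fr}$ for some unit $u\in\mb{Z}[\zeta]^{\times}$; by fact (b), $|\sigma(u)|=|\sigma(\alpha)|^{2}/p^{fr}=q^{r}/p^{fr}=1$ for every $\sigma$, so $u$ is an algebraic integer all of whose conjugates lie on the unit circle, hence a root of unity by Kronecker's theorem; writing $u^{N}=1$ gives $\alpha^{2N}=p^{frN}$, a power of $p$.

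Once (a) and (b) are in place the argument is essentially formal, so there is no serious obstacle; the one point genuinely using the geometry of $X_{m}^{r}$ (beyond Deligne's bound) is fact (b) --- that a Galois conjugate of a weight-$r$ Weil number is again one --- which I would read off from the Weil/Jacobi-sum formula quoted above. An alternative, more combinatorial route, presumably the one used by Shioda--Katsura, is to compute $v_{\mf{p}}(j(\gamma))$ explicitly via Stickelberger's theorem, in terms of the least positive residues $\langle p^{i}c\gamma_{j}\rangle$ mod $m$, and then check directly when the resulting expression is independent of $c\in(\mb{Z}/m)^{\times}$; I would resort to that only if the clean ideal-theoretic proof turned out to need the extra precision.
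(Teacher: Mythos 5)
Your proof is correct, but note that the paper does not prove this lemma at all: it is quoted verbatim from Shioda--Katsura \cite[Lemma 3.1]{SK}, so there is no internal argument to compare against. Your ideal-theoretic route is a clean and complete substitute. The key verifications all check out: since $p\nmid m$, $p$ is unramified, and with the normalization $v_{\mf{p}}(p)=1$ the stated common value $fr/2$ is exactly what the norm computation $\sum_{\mf{p}\mid p}v_{\mf{p}}(\alpha)=r\varphi(m)/2$ forces, so your conventions are consistent with the statement; $\alpha\bar{\alpha}=q^{r}$ shows $(\alpha)$ is supported above $p$; and $(\alpha)^{2}=(p^{fr})$ plus Kronecker gives (ii)$\Rightarrow$(i). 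Your fact (b) is fine as argued via $\sigma_{c}(j(\gamma))=j(c\gamma)$, though it is slightly more than you need: Deligne's theorem already asserts that \emph{every} archimedean absolute value of a Frobenius eigenvalue of weight $r$ equals $q^{r/2}$, so $|\sigma(\alpha)|=q^{r/2}$ for all embeddings $\sigma$ without invoking the Jacobi-sum formula (this also quietly covers the non-primitive class $q^{r/2}$ in $H^{r}$ for even $r$, which is not literally of the form $(-1)^{r}\bar{\chi}(a_0)^{\gamma_0}\cdots j(\gamma)$). Your closing guess about the original source is also accurate in spirit: Shioda--Katsura work with the $\mf{p}$-adic valuations of Jacobi sums via Stickelberger-type expressions $\parallel t\gamma\parallel$ (the same quantities this paper recalls in \S 3.1), which yields the same equivalence with more explicit combinatorial information; your argument trades that precision for brevity, which is all the lemma as stated requires.
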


\begin{lem}\label{l2} Let $m_1,m_2,\dots,m_{2i}$ be positive integers prime to $p$. Let $\alpha_j$ be a Weil number of weight $1$ for $X_{m_j}^1/k$. Assume that for each $1 \leq j\leq 2i$, there is at most one $j^{\prime} \neq j$ such that ${\rm gcd}(m_j,m_{j^{\prime}})> 2$. If $\alpha_1\alpha_2\cdots\alpha_{2i}=q^i$, then$-$after renumbering the $\alpha_j$ if necessary$-$there exists an integer $N$ such that 
\begin{align*}
(\alpha_1\alpha_2)^N=\cdots=(\alpha_{2i-1}\alpha_{2i})^N=q^N.
\end{align*}
\end{lem}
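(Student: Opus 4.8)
The plan is to pass to a common power so that each $\alpha_j$ lies in $\mathbb{Q}(\zeta_{m_j})$, and then to exploit that cyclotomic fields with pairwise almost coprime conductors remain linearly disjoint even over the compositum; this parallels Spiess's argument for products of elliptic curves, the CM fields of the curves being replaced here by the cyclotomic fields $\mathbb{Q}(\zeta_{m_j})$. First I would apply Lemma \ref{l1} to obtain an \emph{even} integer $e>0$ with $\alpha_j^{e}\in\mathbb{Q}(\zeta_{m_j})$ for every $j$. Since a relation $(\alpha_{2l-1}^{e}\alpha_{2l}^{e})^{N'}=(q^{e})^{N'}$ yields the conclusion with $N=eN'$, I may replace each $\alpha_j$ by $\alpha_j^{e}$ and $q$ by $q^{e}$; so I assume $\alpha_j\in\mathbb{Q}(\zeta_{m_j})$, that $q$ is a perfect square, and that $\prod_{j=1}^{2i}\alpha_j=q^{i}$. (I may also assume $m_j\ge 3$, since otherwise $X_{m_j}^{1}$ has no weight-$1$ cohomology and there is nothing to prove.) By Deligne's bound each $\alpha_j$ is an algebraic integer all of whose archimedean absolute values equal $q^{1/2}$.

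Next I would organize the indices combinatorially. Put an edge $\{j,j'\}$ on $\{1,\dots,2i\}$ whenever ${\rm gcd}(m_j,m_{j'})>2$; by hypothesis this graph has maximal degree at most $1$, so it is a disjoint union of edges (``bad pairs'') and isolated vertices (``singletons''), giving a partition $\{1,\dots,2i\}=\bigsqcup_\beta B_\beta$ with each $|B_\beta|\in\{1,2\}$ and with an even number of singletons (because $2i=\sum_\beta|B_\beta|$). Write $M_\beta={\rm lcm}\{m_j:j\in B_\beta\}$. The crucial elementary fact is that ${\rm gcd}(M_\beta,M_{\beta'})\le 2$ for $\beta\ne\beta'$: a prime power $\ell^{n}$ dividing both $M_\beta$ and $M_{\beta'}$ must divide some $m_j$ with $j\in B_\beta$ and some $m_{j'}$ with $j'\in B_{\beta'}$, and since $j,j'$ lie in different blocks they span no edge, so $\ell^{n}\mid{\rm gcd}(m_j,m_{j'})\le 2$. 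The same observation — a prime power dividing an lcm divides one of the terms — shows more generally that ${\rm gcd}(M_{\beta_0},\,{\rm lcm}_{\beta\ne\beta_0}M_\beta)\le 2$ for every $\beta_0$.

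Now put $P_\beta=\prod_{j\in B_\beta}\alpha_j\in\mathbb{Q}(\zeta_{M_\beta})$, so $\prod_\beta P_\beta=q^{i}$. For a fixed block $\beta_0$, set $L'={\rm lcm}_{\beta\ne\beta_0}M_\beta$; then $P_{\beta_0}=q^{i}\prod_{\beta\ne\beta_0}P_\beta^{-1}$ lies in $\mathbb{Q}(\zeta_{M_{\beta_0}})\cap\mathbb{Q}(\zeta_{L'})$. By $\mathbb{Q}(\zeta_a)\cap\mathbb{Q}(\zeta_b)=\mathbb{Q}(\zeta_{{\rm gcd}(a,b)})$ and the bound ${\rm gcd}(M_{\beta_0},L')\le 2$, this intersection is $\mathbb{Q}$, so $P_{\beta_0}\in\mathbb{Q}$. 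Hence $P_\beta$ is a rational algebraic integer, i.e. $P_\beta\in\mathbb{Z}$, with $|P_\beta|=q^{|B_\beta|/2}$ (an integer, $q$ being a square); therefore $P_\beta=\pm q$ when $|B_\beta|=2$ and $P_\beta=\pm q^{1/2}$ when $|B_\beta|=1$, and in either case $P_\beta^{2}=q^{|B_\beta|}$. Finally, renumber the $\alpha_j$ so that the bad pairs become $(\alpha_1,\alpha_2),(\alpha_3,\alpha_4),\dots$ and the (even number of) singletons are grouped into consecutive pairs: for a bad pair $(\alpha_{2l-1}\alpha_{2l})^{2}=P_\beta^{2}=q^{2}$, and for a pair of singletons $(\alpha_{2l-1}\alpha_{2l})^{2}=\alpha_{2l-1}^{2}\alpha_{2l}^{2}=q\cdot q=q^{2}$. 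Thus $N=2$ works (and $N=2e$ once the initial passage to $e$-th powers is undone).

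The step I expect to carry the real content is the combinatorial one: converting the hypothesis that each $m_j$ has at most one partner with ${\rm gcd}>2$ into genuine linear disjointness of $\mathbb{Q}(\zeta_{M_{\beta_0}})$ from the compositum of the remaining $\mathbb{Q}(\zeta_{M_\beta})$. Pairwise linear disjointness would not be enough in general; it is the specific arithmetic of cyclotomic conductors — the conductor of a compositum being an lcm, in which prime powers behave atomically — that rescues the implication. Everything else (Lemma \ref{l1}, the intersection formula for cyclotomic fields, and the triviality that a rational algebraic integer is determined up to sign by its absolute value) is bookkeeping. One could also argue $p$-adically, comparing the valuations $v_{\mathfrak{P}}(\alpha_j)$ at the primes $\mathfrak{P}\mid p$ of $\mathbb{Q}(\zeta_N)$ with $N={\rm lcm}(m_j)$: averaging $\sum_j v_{\mathfrak{P}}(\alpha_j)=v_{\mathfrak{P}}(q^{i})$ over the fibres of the map on primes above $p$ induced by $\mathbb{Q}(\zeta_{M_{\beta_0}})\subset\mathbb{Q}(\zeta_N)$ — legitimate because ${\rm Gal}(\mathbb{Q}(\zeta_N)/\mathbb{Q})=\prod_\beta{\rm Gal}(\mathbb{Q}(\zeta_{M_\beta})/\mathbb{Q})$ — forces $v_{\mathfrak{P}}(P_\beta)=\tfrac12|B_\beta|\,v_{\mathfrak{P}}(q)$ for all $\mathfrak{P}$, so that $P_\beta^{2}/q^{|B_\beta|}$ is a unit with all conjugates of absolute value $1$ and Kronecker's theorem (or Lemma \ref{lsk} for the singletons) concludes.
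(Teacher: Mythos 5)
Your proof is correct and takes essentially the same route as the paper's: group the indices so that the lcm's attached to the groups have pairwise gcd at most $2$, use Lemma \ref{l1} to land in cyclotomic fields, deduce from $\mathbb{Q}(\zeta_a)\cap\mathbb{Q}(\zeta_b)=\mathbb{Q}$ (gcd $\le 2$) that each grouped product is rational, and then square it against the archimedean bound $|\alpha_j|=q^{1/2}$. The only deviations — keeping singleton blocks separate until the end and normalizing by an even power $e$, plus the optional $p$-adic variant — are cosmetic refinements of the paper's argument, which simply pairs the isolated indices at the outset.
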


\begin{proof}
From the assumption, after renumbering the $m_j$ if necessary, we have pairs $(m_1,m_2),\dots,(m_{2i-1},m_{2i})$ such that 
$${\rm gcd}({\rm lcm}(m_{2j-1},m_{2j}),{\rm lcm}(m_{2j^{\prime}-1},m_{2j^{\prime}}))\leq2 \ \text{ for all } \ j \neq j^{\prime}.$$
Put $L_j:={\rm lcm}(m_{2j-1},m_{2j})$ for $1\leq j\leq i$. From the equation $\alpha_1\alpha_2\cdots\alpha_{2i}=q^i$ and Lemma \ref{l1}, there is a positive integer $M$ such that
\begin{align*}
(\alpha_1\alpha_2)^M=q^{Mi}(\alpha_3\cdots\alpha_{2i})^{-M} \in \mb{Q}(\zeta_{L_1})\cap \mb{Q}(\{\zeta_{L_j},j\neq 1\}).
\end{align*}
Now we have
$$\mb{Q}(\zeta_{L_1})\cap \mb{Q}(\{\zeta_{L_j},j\neq 1\}) \subset \mb{Q}(\zeta_{L_1})\cap \mb{Q}(\zeta_L)=\mb{Q}.$$
Here $L={\rm lcm}(L_j,j\neq 1)$. Hence $(\alpha_1\alpha_2)^M$ belongs to $\mb{Q}$ and therefore $(\alpha_1\alpha_2)^{2M}=|(\alpha_1\alpha_2)^M|^2=q^{2M}$, because $\alpha$ is a Weil number of weight $1$. The same argument shows that $(\alpha_{2i-1}\alpha_{2i})^{2M}=q^{2M}$ for $2\leq i\leq d$.
\end{proof}

\begin{lem}\label{l3}
Let $m_1, \dots, m_d$ be positive integers prime to $p$. Let $r_1, \dots, r_d$ be positive odd integers. For $j=1,\dots,d$, let $\alpha_j$ be a Weil number of weight $i_j$ for $X_{m_j}^{r_j}/k$. Assume that there is an integer $j_0$ such that

$(1)$ $(m_{j_0},m_j)\leq 2$ for $j \neq j_0$,

$(2)$ $i_{j_0}=r_{i_{j_0}}$,

$(3)$ the order of $p$ in the group $(\mb{Z}/m_{j_0})^{\times}$ is odd.\\
Then the product $\alpha_1\alpha_2\cdots\alpha_d$ does not belong to $\mb{Q}$.
\end{lem}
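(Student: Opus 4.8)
The plan is to single out the factor $\alpha_{j_0}$: I would show that if $\alpha_1\cdots\alpha_d$ lay in $\mathbb{Q}$, then a power of $\alpha_{j_0}$ alone would lie in $\mathbb{Q}$ and hence be (up to sign) a power of $q$, and that hypotheses $(2)$ and $(3)$ exclude this by means of Lemma~\ref{lsk}. First I would reduce to $\alpha_{j_0}$: since every $r_j$ is odd, \eqref{c} shows that a Weil number of weight $i_j\neq r_j$ for $X_{m_j}^{r_j}/k$ can only occur when $i_j$ is even, and then $\alpha_j=q^{i_j/2}\in\mathbb{Q}$. Putting $S=\{\,j:i_j=r_j\,\}$, we have $j_0\in S$ by $(2)$ and $\alpha_j\in\mathbb{Q}$ for $j\notin S$, so it suffices to prove $\prod_{j\in S}\alpha_j\notin\mathbb{Q}$. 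Moreover, replacing $k$ by a finite extension — which replaces each $\alpha_j$ by a power of itself, preserves the hypotheses, and for which the new conclusion implies the original — I may assume $q\equiv 1\bmod m_{j_0}$, so that $\mathbb{F}_{p^f}\subseteq k$, where $f$ denotes the residue degree of $p$ in $\mathbb{Q}(\zeta_{m_{j_0}})$, equivalently the order of $p$ in $(\mathbb{Z}/m_{j_0})^{\times}$; by $(3)$, $f$ is odd.

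Next, assume for contradiction that $\prod_{j\in S}\alpha_j\in\mathbb{Q}$. By Lemma~\ref{l1} I fix $e>0$ with $\alpha_j^e\in\mathbb{Q}(\zeta_{m_j})$ for all $j\in S$. Writing $\alpha_{j_0}^e=\bigl(\prod_{j\in S}\alpha_j\bigr)^e\prod_{j\in S,\,j\neq j_0}\alpha_j^{-e}$ gives
$$\alpha_{j_0}^e\in\mathbb{Q}(\zeta_{m_{j_0}})\cap\mathbb{Q}(\zeta_M)=\mathbb{Q}\bigl(\zeta_{\gcd(m_{j_0},M)}\bigr),\qquad M:={\rm lcm}(m_j:j\in S,\ j\neq j_0).$$
Here $\gcd(m_{j_0},M)\mid 2$: an odd prime dividing $\gcd(m_{j_0},M)$ would divide $m_{j_0}$ and some $m_j$ with $j\neq j_0$, contradicting $(1)$; and $4\mid\gcd(m_{j_0},M)$ would force $4\mid m_{j_0}$ and $4\mid m_j$ for some $j\neq j_0$ (an ${\rm lcm}$ of integers of $2$-adic valuation $\le 1$ has $2$-adic valuation $\le 1$), again contradicting $(1)$. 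Hence $\alpha_{j_0}^e\in\mathbb{Q}$, and since $\alpha_{j_0}$ is an algebraic integer of absolute value $q^{r_{j_0}/2}$ (weight $r_{j_0}$, by $(2)$) it follows that $\alpha_{j_0}^{2e}=q^{er_{j_0}}$; in particular some power of $\alpha_{j_0}$ is a power of $p$.

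Finally I would contradict $(3)$ via Lemma~\ref{lsk}. By Weil's formula over $k$ (recall $q\equiv 1\bmod m_{j_0}$), $\alpha_{j_0}$ equals a root of unity in $\mathbb{Q}(\zeta_{m_{j_0}})$ times the $[k:\mathbb{F}_{p^f}]$-th power of a Weil number $\beta\in\mathbb{Q}(\zeta_{m_{j_0}})$ of weight $r_{j_0}$ for the Fermat variety $V_{m_{j_0}}^{r_{j_0}}/\mathbb{F}_{p^f}$. Since every root of unity has vanishing $\mathfrak{p}$-adic valuation at each prime $\mathfrak{p}\mid p$ of $\mathbb{Q}(\zeta_{m_{j_0}})$, the fact that a power of $\alpha_{j_0}$ is a power of $p$ forces some power of $\beta$ to be a power of $p$ (raise the relation $\alpha_{j_0}$ = (root of unity)$\cdot\beta^{[k:\mathbb{F}_{p^f}]}$ to the order of the root of unity and combine with the conclusion of the previous step). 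Then Lemma~\ref{lsk} yields $v_{\mathfrak{p}}(\beta)=\tfrac{f r_{j_0}}{2}$ for all $\mathfrak{p}\mid p$. But $f$ and $r_{j_0}$ are both odd, whereas $\beta$ is an algebraic integer and $p$ is unramified in $\mathbb{Q}(\zeta_{m_{j_0}})$, so $v_{\mathfrak{p}}(\beta)\in\mathbb{Z}_{\ge 0}$ — a contradiction. Therefore $\alpha_1\cdots\alpha_d\notin\mathbb{Q}$.

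I expect the last step to be the main obstacle: one must carefully transport the statement ``a power of $\alpha_{j_0}$ is a power of $p$'' — which concerns the given model $X_{m_{j_0}}^{r_{j_0}}$ over the given field, with arbitrary coefficients $a_i$ — into a statement about a genuine Weil number $\beta$ of the Fermat variety over $\mathbb{F}_{p^f}$, keeping track of base change of Frobenius eigenvalues and of the harmless coefficient twists $\bar\chi(a_i)^{\gamma_i}$ in Weil's formula, so that Lemma~\ref{lsk} is applicable. The remaining steps are bookkeeping together with the elementary computation of the intersection of the two cyclotomic fields.
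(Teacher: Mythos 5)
Your proposal is correct and follows essentially the same route as the paper: assume the product is rational, use Lemma~\ref{l1} and assumption (1) to place a power of $\alpha_{j_0}$ in $\mb{Q}(\zeta_{m_{j_0}})\cap\mb{Q}(\zeta_{m_j},j\neq j_0)=\mb{Q}$, deduce from the weight that some power of $\alpha_{j_0}$ is a power of $p$, and contradict this via Lemma~\ref{lsk} because $fr_{j_0}/2$ is not an integer when $f$ and $r_{j_0}$ are odd. Your extra steps (discarding the even-weight rational factors, base-changing so that $q\equiv 1 \bmod m_{j_0}$, and expressing $\alpha_{j_0}$ as a root of unity times a power of a Weil number of $V_{m_{j_0}}^{r_{j_0}}/\mb{F}_{p^f}$) are just a more explicit rendering of what the paper compresses into ``by linear algebra,'' and they are sound.
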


\begin{proof}
We may assume that $j_0 =1$. From Lemma \ref{l1}, there is a positive integer $N$ such that $\alpha_j^N \in \mb{Q}(\zeta_{m_j})$ for all $j$. If $\alpha_1\alpha_2\cdots\alpha_d=:b \in \mb{Q}$, then we have
\begin{align*}
\alpha_1^N = b^N(\alpha_2\cdots\alpha_d)^{-N} \in \mb{Q}(\zeta_{m_1})\cap\mb{Q}(\zeta_{m_j}, j\neq 1).
\end{align*}
From the assumption (1), $\mb{Q}(\zeta_{m_1})\cap\mb{Q}(\zeta_{m_j}, j\neq 1)=\mb{Q}$. By the assumption (2), we have $|\alpha_1^N|=q^{Nr_1/2}$. Therefore we see that some power of $\alpha_1$ is a power of $p$.

On the other hand, by linear algebra, we see that some power of $\alpha_1$ is a power of a Weil number $\alpha$ of weight $r_1$ for $X_{m_1}^{r_1}/\mb{F}_{p^f}$. Here $f$ is the order of $p$ in $(\mb{Z}/m_1)^{\times}$ which is odd by assumption (3). Since $\frac{fr_1}{2}$ is not an integer, the valuation $v_{\mf{p}}(\alpha)$ is not equal to $\frac{fr_1}{2}$ for all primes $\mf{p}$ in $\mb{Q}(\zeta_{m_1})$ dividing $p$. By Lemma \ref{lsk}, any power of $\alpha$ is not a power of $p$. Therefore any power of $\alpha_1$ is not a power of $p$, which is a contradiction. 
\end{proof}

\begin{lem}\label{l4} Let $m_1, \dots, m_d$ be positive integers prime to $p$. Let $r_1, \dots, r_d$ be positive integers. Assume that ${\rm gcd}(m_j,m_{j^{\prime}})\leq 2$ for $j \neq j^{\prime}$. For $j=1,\dots,d$, let $\alpha_j$ be a Weil number of weight $2r_j$ for $X_{m_j}^{2r_j}/k$. Put $r:=r_1+\cdots+r_d$. If $\alpha_1\alpha_2\cdots\alpha_d=q^r$, then there exists an integer $N$ such that 
\begin{align*}
\alpha_1^N=q^{Nr_1}, \cdots, \alpha_d^N=q^{Nr_d}.
\end{align*}
\end{lem}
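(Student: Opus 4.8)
The plan is to mimic the proofs of Lemma~\ref{l2} and Lemma~\ref{l3}: push the relation into a single cyclotomic field, intersect cyclotomic fields to land in $\mb{Q}$, and then exploit the weight of the Weil number. First I would apply Lemma~\ref{l1} to each $\alpha_j$ and pass to a common multiple of the resulting exponents, obtaining one positive integer $M$ with $\alpha_j^M\in\mb{Q}(\zeta_{m_j})$ for every $j$. Since each $\alpha_j\neq 0$, the hypothesis $\alpha_1\cdots\alpha_d=q^r$ gives
\begin{align*}
\alpha_1^M=q^{rM}(\alpha_2\cdots\alpha_d)^{-M},
\end{align*}
so $\alpha_1^M$ lies in $\mb{Q}(\zeta_{m_1})$ and at the same time in $\mb{Q}(\zeta_{m_2},\dots,\zeta_{m_d})=\mb{Q}(\zeta_L)$, where $L={\rm lcm}(m_2,\dots,m_d)$.

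Next I would compute the intersection $\mb{Q}(\zeta_{m_1})\cap\mb{Q}(\zeta_L)$. Using $\mb{Q}(\zeta_a)\cap\mb{Q}(\zeta_b)=\mb{Q}(\zeta_{{\rm gcd}(a,b)})$, the distributivity ${\rm gcd}(m_1,{\rm lcm}(m_2,\dots,m_d))={\rm lcm}({\rm gcd}(m_1,m_2),\dots,{\rm gcd}(m_1,m_d))$, and the assumption ${\rm gcd}(m_1,m_j)\leq 2$ for $j\neq 1$, I get ${\rm gcd}(m_1,L)\mid 2$ and hence $\mb{Q}(\zeta_{m_1})\cap\mb{Q}(\zeta_L)=\mb{Q}$. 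Thus $\alpha_1^M\in\mb{Q}$. Since $\alpha_1$ is a Weil number of the even weight $2r_1$, we have $|\alpha_1^M|^2=q^{2Mr_1}$, and combined with $\alpha_1^M\in\mb{Q}$ this forces $\alpha_1^{2M}=q^{2Mr_1}$. Running the identical argument for each index $j$ in place of $1$, the integer $N:=2M$ satisfies $\alpha_j^N=q^{Nr_j}$ for all $j$.

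The argument is really the same as for Lemma~\ref{l2}, only simpler: here there is no pairing of the $m_j$ and so no renumbering is required, and the even weight $2r_j$ makes the final ``lift the square of the absolute value'' step immediate. I therefore do not expect a genuine obstacle; the one place that deserves a line of care is the cyclotomic-field intersection, where one must invoke both $\mb{Q}(\zeta_a)\cap\mb{Q}(\zeta_b)=\mb{Q}(\zeta_{{\rm gcd}(a,b)})$ and the fact that ${\rm gcd}$ distributes over ${\rm lcm}$, but both are standard.
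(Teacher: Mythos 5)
Your argument is correct and is essentially the paper's own proof: take a common exponent $M$ from Lemma~\ref{l1}, use the relation to place $\alpha_j^M$ in $\mb{Q}(\zeta_{m_j})\cap \mb{Q}(\{\zeta_{m_{j'}},j'\neq j\})=\mb{Q}$ via the gcd hypothesis, and conclude $\alpha_j^{2M}=|\alpha_j^M|^2=q^{2Mr_j}$ from the weight, with $N=2M$. The only difference is that you spell out the cyclotomic intersection (via $\mb{Q}(\zeta_a)\cap\mb{Q}(\zeta_b)=\mb{Q}(\zeta_{\gcd(a,b)})$ and distributivity of $\gcd$ over ${\rm lcm}$), which the paper leaves implicit.
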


\begin{proof}
From the equation $\alpha_1\alpha_2\cdots\alpha_d=q^r$ and Lemma \ref{l1}, there is a positive integer $M$ such that 
$$\alpha_1^M=q^{Mr}(\alpha_2\cdots\alpha_d)^{-M} \in \mb{Q}(\zeta_{m_1})\cap \mb{Q}(\{\zeta_{m_j},j\neq 1\}).$$
By the assumption, we have
$$\mb{Q}(\zeta_{m_1})\cap \mb{Q}(\{\zeta_{m_j},j\neq 1\})=\mb{Q}.$$
Hence $\alpha_1^M \in \mb{Q}$ and therefore $\alpha_1^{2M}=|\alpha_1^M|^2=q^{2Mr_1}$, because $\alpha_1$ is a Weil number of weight $r_1$. The same argument shows that $\alpha_j^{2M}=q^{2Mr_j}$ for $2\le j\le d$. 
\end{proof}

\section{Proof of the theorem}
Before beginning to prove the theorem, we define a class $\ml{A}(k)$ constructed from projective smooth varieties as in Theorem \ref{t} and prove that Conjecture \ref{cj} in the introduction holds for varieties which belong to $\ml{A}(k)$. The argument is based on that of Soul\'e \cite{So}.

Let $Y$ be a projective smooth variety over a finite field $k$ which is not necessarily geometrically integral over $k$. Put $\kappa :=\Gamma (Y,\ml{O}_Y)$. Then the scalar extension $Y\otimes _k\kappa $ is a disjoint union of copies of $Y$, and we have
\begin{align*}
\CH^i(Y)\otimes \mb{Q}_{\ell}&\simeq \bigl(\CH^i(Y\otimes _k\kappa )\otimes \mb{Q}_{\ell}\bigr)^{{\rm Gal}(\kappa /k)}\\
&\simeq \bigl(\CH^i(Y)\otimes \mb{Q}_{\ell}[{\rm Gal}(\kappa/k)]\bigr)^{{\rm Gal}(\kappa /k)}.
\end{align*}
Hence the bijectivity of the cycle class map \eqref{map} for $Y/k$ is equivalent to that for $Y/\kappa $. For this reason we also consider the case that $Y$ is not geometrically integral.

\begin{df}\label{ak} For any finite field $k$, we define $\ml{A}(k)$ to be the smallest class of projective smooth varieties over $k$ satisfying the following properties:

(1) Varieties which satisfy the assumption of Theorem \ref{t} belong to $\ml{A}(k)$.

(2) If $X$ and $Y$ belong to $\ml{A}(k)$, then the disjoint union $X\amalg Y$ of $X$ and $Y$ belongs to $\ml{A}(k)$.

(3) If $X$ belongs to $\ml{A}(k)$, $Y$ is a projective smooth variety such that $\dim X = \dim Y$, $Y$ is a direct summand of $X$ as Chow motives with $\mb{Z}[\frac{1}{n}]$ coefficients for some $n\geq1$, then $Y$ belongs to $\ml{A}(k)$.

(4) Let $X$ be a projective smooth variety over $k$ and $k^{\prime }$ be a finite extension of $k$. If $X\otimes k^{\prime }$ belongs to $\ml{A}(k^{\prime})$, then $X$ belongs to $\ml{A}(k)$.

(5) If $X$ belongs to $\ml{A}(k)$ and $E$ is a vector bundle on $X$, then the projective bundle $P(E)$ on $X$ associated to $E$ belongs to $\ml{A}(k)$.

(6) Let $X$ be a projective smooth variety over $k$ and $Y$ be a closed smooth subvariety of $X$. Let $W$ be the blowing up of $X$ along $Y$. Then $W$ belongs to $\ml{A}(k)$ if and only if $X$ and $Y$ belong to $\ml{A}(k)$.
\end{df}
\begin{cor}\label{ca}
Let $X$ be a variety which belong to $\ml{A}(k)$. Then we have the following{\rm :}

$(1)$ Conjecture \ref{cj} holds for $X$,

$(2)$ The Lichtenbaum conjecture holds for $X$.
\end{cor}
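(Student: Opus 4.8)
The plan is to reduce Corollary \ref{ca} to the case already handled by Geisser and Kahn, namely to show that every $X \in \ml{A}(k)$ lies in $\ml{B}_{tate}(k)$, and then invoke their results that $\T(X/k,i)$ for all $i$ together with abelian type implies Conjecture \ref{cj} and the Lichtenbaum conjecture. So the heart of the matter is two things: first, that the Tate conjecture $\T(X/k,i)$ holds for all $i$ and all $X \in \ml{A}(k)$; second, that $X$ is of abelian type. I would carry out both by induction on the structure of the class $\ml{A}(k)$, following the six closure properties in Definition \ref{ak}.

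First I would establish the base case (1): a variety $X = X_{m_1}^{r_1}\times\cdots\times X_{m_d}^{r_d}$ satisfying the hypotheses of Theorem \ref{t}. By Theorem \ref{t}, $\T(X/k,i)$ holds for all $i$. For abelian type, each factor $X_{m_j}^{r_j}$ has a Chow motive lying in the category generated by Artin motives and motives of abelian varieties (Fermat varieties and twisted Fermat hypersurfaces are of abelian type, e.g.\ by the inductive structure of Shioda--Katsura, or by Katsura--Shioda's relation of Fermat motives to motives of curves), and the category of Chow motives of abelian type is closed under tensor product, so $X$ itself is of abelian type. Hence $X \in \ml{B}_{tate}(k)$. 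Then I would propagate this through the closure operations. For (2), disjoint union, $\T(X \amalg Y / k, i)$ is the direct sum of $\T(X/k,i)$ and $\T(Y/k,i)$, and the motive of a disjoint union is the direct sum of motives, so $\ml{B}_{tate}(k)$ is closed under $\amalg$. For (3), passing to a direct summand $Y$ of $X$ as a Chow motive with $\mb{Z}[\frac1n]$-coefficients: the cycle class map and the cohomology decompose compatibly along the idempotent, so surjectivity for $X$ gives surjectivity for $Y$ after tensoring with $\mb{Q}_\ell$ (the $\frac1n$ is invisible rationally), and a summand of an abelian-type motive is of abelian type. For (4), finite base extension: $\T(X/k,i)$ is equivalent to $\T(X\otimes k'/k',i)$ as remarked right before Definition \ref{ak} (the Chow groups and invariants only pick up a harmless $\mathrm{Gal}(k'/k)$-action), and descent of the motive is standard; so $X\otimes k' \in \ml{B}_{tate}(k')$ forces $X \in \ml{B}_{tate}(k)$.

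For (5), a projective bundle $P(E)$ over $X$ of rank $n$: the projective bundle formula gives $\CH^i(P(E)) \simeq \bigoplus_{j=0}^{n-1}\CH^{i-j}(X)$ and the analogous decomposition $H^{2i}(\overline{P(E)},\mb{Q}_\ell(i))^{G_k} \simeq \bigoplus_j H^{2(i-j)}(\overline{X},\mb{Q}_\ell(i-j))^{G_k}$, compatibly with the cycle class maps, so $\T(X/k,\ast)$ implies $\T(P(E)/k,i)$; on motives, $h(P(E)) \simeq \bigoplus_j h(X)(-j)$, which stays of abelian type. For (6), the blow-up $W$ of $X$ along a smooth closed $Y$: the blow-up formula for Chow groups and for $\ell$-adic cohomology expresses $\CH^\ast(W)$ (resp.\ $H^\ast(\overline W)$) as $\CH^\ast(X)$ plus Tate twists of $\CH^\ast(Y)$, compatibly with cycle classes, so $\T(W/k,\ast) \iff \T(X/k,\ast) \text{ and } \T(Y/k,\ast)$; and $h(W) \simeq h(X)\oplus\bigoplus_{j=1}^{c-1}h(Y)(-j)$ where $c$ is the codimension, so $W$ is of abelian type iff $X$ and $Y$ are. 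This establishes by induction that $\ml{A}(k) \subseteq \ml{B}_{tate}(k)$. Then Corollary \ref{ca}(1), Conjecture \ref{cj} for $X$, is exactly Kahn's and Geisser's conclusion for varieties in $\ml{B}_{tate}(k)$ (agreement of rational and numerical equivalence, i.e.\ the Tate--Beilinson conjecture, plus Parshin's conjecture, which combine into Conjecture \ref{cj} via the cited results), and (2), the Lichtenbaum conjecture, follows likewise since for varieties in $\ml{B}_{tate}(k)$ the relevant special values of zeta functions are controlled by the (now known) motivic cohomology, as in Kahn's work.

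The main obstacle I expect is not any single formula but the bookkeeping in property (3): one must check that ``direct summand of Chow motives with $\mb{Z}[\frac1n]$-coefficients'' genuinely transports both the Tate surjectivity and the abelian-type property, and in particular that the idempotent cutting out $Y$ acts compatibly on $\CH^\bullet\otimes\mb{Q}_\ell$ and on $H^{2\bullet}(\overline{\phantom{X}},\mb{Q}_\ell(\bullet))^{G_k}$ through the cycle class map — this is where one has to be a little careful that the correspondence realizing the splitting is an algebraic cycle and hence induces compatible maps on both sides. The remaining cases (2), (4), (5), (6) are routine consequences of standard motivic decomposition formulas (disjoint union, base change, projective bundle, blow-up) that respect the cycle class map; I would state them and cite the standard references rather than reprove them.
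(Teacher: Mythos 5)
Your proposal is correct in outline and shares the paper's skeleton --- structural induction over the six closure properties of Definition \ref{ak}, using the standard motivic decompositions (disjoint union, summand, finite base change, projective bundle, blow-up), followed by an appeal to Geisser and Kahn --- but you propagate a different invariant, and that changes what actually has to be checked. The paper does \emph{not} propagate membership in $\ml{B}_{tate}(k)$: it takes $\ml{A}^{\prime}(k)$ to be the class of \emph{all} smooth projective varieties satisfying the Tate--Beilinson conjecture (bijectivity of the cycle class map), verifies that $\ml{A}^{\prime}(k)$ has properties (1)--(6) --- property (1) from Theorem \ref{t} (with Kahn's theorem supplying injectivity, since the base varieties are of abelian type), and (2)--(6) by purely formal compatibility of the decompositions with the cycle map, (6) being spelled out via $\widetilde{W}\simeq \widetilde{X}\oplus\bigoplus_{j=1}^{c-1}\widetilde{Y}\otimes\mb{L}^{j}$ --- and concludes $\ml{A}(k)\subseteq\ml{A}^{\prime}(k)$ by minimality; Conjecture \ref{cj} and the Lichtenbaum conjecture then follow from Geisser and Kahn. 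Your route, $\ml{A}(k)\subseteq\ml{B}_{tate}(k)$, is a legitimate alternative (and matches how the paper handles Corollary \ref{ct}), but it obliges you to verify at every step that abelian-typeness is preserved, and the one place where this is not formal is exactly property (4): you need that if the Chow motive of $X\otimes k^{\prime}$ is of abelian type over $k^{\prime}$ then the motive of $X$ is of abelian type over $k$, which you wave off as ``standard''; it is true (the motive of $X$ is a direct summand of the pushforward of that of $X_{k^{\prime}}$, and pushforward preserves abelian type since Weil restrictions of abelian varieties are abelian varieties), but it needs a proof or precise citation, whereas with the paper's invariant property (4) is immediate (surjectivity descends by Lemma \ref{l5}, injectivity by a transfer argument). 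Two smaller caveats: in (4) you call $\T(X/k,i)$ ``equivalent'' to $\T(X\otimes k^{\prime}/k^{\prime},i)$, but only the descent direction is available and needed --- the remark before Definition \ref{ak} is about the constant field $\kappa=\Gamma(Y,\ml{O}_Y)$, not arbitrary finite base change; and the abelian-typeness of the twisted hypersurfaces $X_{m}^{r}(\mathbf{a})$ in your base case itself rests on the same descent fact, since they become Fermat varieties only after a finite extension (the paper needs this too, but only at the base case). In short: your approach buys the slightly stronger intermediate statement $\ml{A}(k)\subseteq\ml{B}_{tate}(k)$; the paper's choice of invariant buys an induction that never has to touch the abelian-type condition.
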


\begin{proof}
By results of Kahn \cite{Ka} and Geisser \cite{Ge}, it suffices to show that the Tate-Beilinson conjecture holds for all $i$ and for $X \in \ml{A}(k)$.

Let $\ml{A}^{\prime }(k)$ be a class of all projective smooth varieties over $k$ for which the Tate-Beilinson conjecture holds. Then by the smallness of $\ml{A}(k)$, it suffices to show that $\ml{A}^{\prime }(k)$ satisfies the above properties (1)--(6). For (1), the assertion follows from Theorem \ref{t}. We only show that $\ml{A}^{\prime }(k)$ satisfies property (6). The other properties (2)--(5) can be checked by similar arguments. Let the notation be as in property (6). Since we have the following decomposition of the Chow motive $\widetilde{W}$ of $W$ (cf.\ \cite{So})
\begin{align*}
\widetilde{W}\simeq \widetilde{X}\oplus \bigl(\bigoplus_{j=1}^{c-1}\widetilde{Y}\otimes \mb{L}^j\bigr),
\end{align*}
we have the following isomorphisms (cf.\ \cite[Th$\acute{\text{e}}$or$\grave {\text{e}}$me 4.]{So}):
\begin{align*}
\CH^i(W)&\simeq \CH^i(X)\oplus \big( \bigoplus_{j=1}^{c-1}\CH^{i-j}(Y)\big),\\
H^{2i}(\overline{W},\mb{Q}_{\ell}(i))&\simeq H^{2i}(\overline{X},\mb{Q}_{\ell}(i))\oplus \big( \bigoplus_{j=1}^{c-1}H^{2(i-j)}(\overline{Y},\mb{Q}_{\ell}(i-j))\big).
\end{align*}
Hence $W$ belongs to $\ml{A}^{\prime }(k)$ if and only if $X$ and $Y$ belong to $\ml{A}^{\prime }(k)$.
\end{proof}

We prove Theorem \ref{t} using Lemmas about Weil numbers for $X_m^r/k$ in \S 2. We can formulate the assertion of the Tate conjecture for Chow motives and simplify the argument. For the definition and basic properties  of Chow motives, we refer to \cite{So}. Spiess \cite{Sp} used Chow motives in the proof of the Tate conjecture for products of elliptic curves. We use similar arguments of Spiess. We first give the following Lemma:
\begin{lem}\label{l5}
Let $L/k$ be a finite extension and $X$ be a projective smooth variety over $k$. Then $\T(X\times_kL/L,i)$ implies $\T(X/k,i)$.
\end{lem}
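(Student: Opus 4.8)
The plan is to use a standard restriction--corestriction (transfer) argument on the cycle class map. Write $n=[L:k]$, and consider the finite flat morphism $\pi\colon X_L:=X\times_k L\to X$. This induces a pushforward (norm) map $\pi_*\colon \CH^i(X_L)\to\CH^i(X)$ on Chow groups and, on the cohomological side, a trace map $\pi_*\colon H^{2i}(\overline{X_L},\mb{Q}_\ell(i))\to H^{2i}(\overline{X},\mb{Q}_\ell(i))$; since $\overline{X_L}=\overline X$ as schemes over $\overline k$, the latter trace map is really the sum over the $n$ conjugate embeddings, i.e.\ the norm for the action of $G_k/G_L$ on $H^{2i}(\overline X,\mb{Q}_\ell(i))$. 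The composite $\pi_*\circ\pi^*$ is multiplication by $n$ on both $\CH^i(X)\otimes\mb{Q}_\ell$ and on $H^{2i}(\overline X,\mb{Q}_\ell(i))^{G_k}$, hence an isomorphism after tensoring with $\mb{Q}_\ell$ (or just $\mb{Q}$).

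First I would record the compatibility of the cycle class map with $\pi^*$ and $\pi_*$: there is a commutative diagram
\begin{equation*}
\begin{CD}
\CH^i(X)\otimes\mb{Q}_\ell @>\pi^*>> \CH^i(X_L)\otimes\mb{Q}_\ell @>\pi_*>> \CH^i(X)\otimes\mb{Q}_\ell\\
@V\rho_X^iVV @V\rho_{X_L}^iVV @V\rho_X^iVV\\
H^{2i}(\overline X,\mb{Q}_\ell(i))^{G_k} @>>> H^{2i}(\overline{X_L},\mb{Q}_\ell(i))^{G_L} @>\pi_*>> H^{2i}(\overline X,\mb{Q}_\ell(i))^{G_k}
\end{CD}
\end{equation*}
whose horizontal composites are multiplication by $n$. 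Then I would argue as follows: given a class $\xi\in H^{2i}(\overline X,\mb{Q}_\ell(i))^{G_k}$, view it inside $H^{2i}(\overline{X_L},\mb{Q}_\ell(i))^{G_L}$ via the natural inclusion (it is $G_L$-invariant since it is even $G_k$-invariant); by $\T(X_L/L,i)$ there is $z\in\CH^i(X_L)\otimes\mb{Q}_\ell$ with $\rho_{X_L}^i(z)=\xi$; then $\rho_X^i(\pi_* z)=\pi_*\xi=n\xi$ by commutativity of the right square, so $\tfrac1n\pi_*z\in\CH^i(X)\otimes\mb{Q}_\ell$ maps to $\xi$. This proves surjectivity of $\rho_X^i$, i.e.\ $\T(X/k,i)$.

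The only point needing care—and the main obstacle, though a minor one—is the identification of the cohomological transfer $\pi_*$ on $H^{2i}(\overline{X_L},\mb{Q}_\ell(i))^{G_L}\to H^{2i}(\overline X,\mb{Q}_\ell(i))^{G_k}$ and the verification that $\pi_*\xi=n\xi$ for a class $\xi$ that is already $G_k$-invariant: concretely, $H^{2i}(\overline{X_L},\mb{Q}_\ell(i))$ is $H^{2i}(\overline X,\mb{Q}_\ell(i))$ with $G_L$-action, $\pi_*$ is $\sum_{\sigma\in G_k/G_L}\sigma$, and on a $G_k$-fixed class each summand acts trivially, giving the factor $n=\#(G_k/G_L)$. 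I would also note that the same transfer formalism on $\CH^i$ (proper pushforward for the finite morphism $\pi$) is classical and compatible with cycle classes, so the diagram commutes; tensoring with $\mb{Q}_\ell$ makes multiplication by $n$ invertible and finishes the proof.
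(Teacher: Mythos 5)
Your argument is correct and is essentially the same restriction--corestriction (transfer) argument as the proof the paper relies on, namely \cite[Lemma 1]{Sp}, which the paper cites instead of reproving. The compatibility of the cycle class map with proper pushforward along the finite morphism $\pi$ and the identification of $\pi_*$ on cohomology with the sum over $G_k/G_L$ are exactly the points that make that proof work, and you have handled them correctly.
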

For the proof of this lemma, see \cite[Lemma 1]{Sp}.  
\renewcommand{\proofname}{\rm \bf Proof of Theorem \ref{t}\,(1)\,(a)}
\begin{proof}
Using the motivic decomposition of the Chow motive $X_m^1=1\oplus X_m^{+} \oplus \mb{L}$ (see \cite{So}), our task is reduced to show that $\T(X_{m_1}^{+}\otimes X_{m_2}^{+}\otimes \cdots\otimes X_{m_s}^{+}/k,i)$ holds for all $s$ and $i$. Since Soul\'e proved that $H^{2i}(\overline{X_{m_1}^{+}\otimes X_{m_2}^{+}\otimes \cdots\otimes X_{m_s}^{+}},\mb{Q}_{\ell}(i))=0$ for $s\neq 2i$, we may assume $s=2i$. We have
\begin{align*}
H^{2i}(\overline{X_{m_1}^{+}\otimes X_{m_2}^{+}\otimes \cdots\otimes X_{m_s}^{+}},\mb{Q}_{\ell}(i))^{G_k}\simeq \bigl(H_{1}\otimes H_{2}\otimes \cdots \otimes H_{2i}\bigr)^{F=q^i}.
\end{align*}
Here we set $H_{s}=H^1(\overline{X_{m_s}^1},\mb{Q}_{\ell})$ and $F \in G_k$ is the geometric Frobenius.

A basis of the vector space $\bigl(H_{1}\otimes H_{2}\otimes \cdots \otimes H_{2i}\bigr)^{F=q^i}$ corresponds to $2i$-tuples $(\alpha_1,\alpha_2,\dots,\alpha_{2i})$ such that $\alpha_1\alpha_2\cdots\alpha_{2i}=q^i$ (where $\alpha _s$ is a Weil number of weight $1$ for $X_{m_s}^1/k$). 

From the assumption (a) in Theorem \ref{t} and Lemma \ref{l2}, there is a positive integer $N$ such that for every such tuples $(\alpha_1,\alpha_2,\dots,\alpha_{2i})$, after renumbering the $\alpha_j$ if necessary, we have
\begin{align*}
(\alpha_1\alpha_2)^N=\cdots=(\alpha_{2i-1}\alpha_{2i})^N=q^N.
\end{align*}
From this we obtain that the map
\begin{align*}
\bigoplus_{\sigma}\bigotimes _{j=1}^iH^{2}(\overline{X_{m_{\sigma(2j-1)}}^{+}\otimes X_{m_{\sigma(2j)}}^{+}}&,\mb{Q}_{\ell}(1))^{G_L} \\
&\longrightarrow H^{2i}(\overline{X_{m_1}^{+}\otimes X_{m_2}^{+}\otimes \cdots\otimes X_{m_{2i}}^{+}},\mb{Q}_{\ell}(i))^{G_L}
\end{align*}
is surjective, where $\sigma$ runs through the set of permutations of $\{1,2,\dots,2i\}$ and $L$ is a finite extension of $k$ with $[L:k]=N$. Now the assertion follows from Lemma \ref{l5}, $\T(X_m^1\times X_{m^{\prime}}^1\times_kL/L,1)$(it is true) and the commutative diagram
\begin{align*}{\scriptsize 
\xymatrix{\displaystyle 
\bigoplus_{\sigma}\bigotimes _{j=1}^i\CH^1(X_{m_{\sigma(2j-1)}}^{+}\otimes X_{m_{\sigma(2j)}}^{+}\otimes L)\otimes \mb{Q}_{\ell} \ar[r] \ar[d] &\displaystyle\bigoplus_{\sigma}\bigotimes _{j=1}^iH^{2}(\overline{X_{m_{\sigma(2j-1)}}^{+}\otimes X_{m_{\sigma(2j)}}^{+}},\mb{Q}_{\ell}(1))^{G_L} \ar@{>>}[d]\\
\CH^i(X_{m_1}^{+}\otimes X_{m_2}^{+}\otimes \cdots\otimes X_{m_{2i}}^{+}\otimes L)\otimes\mb{Q}_{\ell}\ar[r]&H^{2i}(\overline{X_{m_1}^{+}\otimes X_{m_2}^{+}\otimes \cdots\otimes X_{m_{2i}}^{+}},\mb{Q}_{\ell}(i))^{G_L}.
}}
\end{align*}
\end{proof}
\renewcommand{\proofname}{\rm \bf Proof of Theorem \ref{t}\,(1)\,(b)}
\begin{proof}
We first recall a result of Soul\'e (\cite[Theorem 3]{So}).

Let $C_1,\dots,C_d$ be projective, smooth and geometrically irreducible curves over $k=\mb{F}_{q}$, and $X=C_1\times \cdots\times C_d$ be the product. We consider the following condition on $X/k$ and integer $0\leq i\leq d$:\bigskip\\
{\bf Condition $\boldsymbol{(*)_i}$} \ Let $j$ be an even integer with $4\leq j\leq  \inf(2i,2d-2i)$. For any $1\leq n_1<n_2<\dots<n_j\leq d$, if $\alpha _{n_1},\dots,\alpha _{n_j}$ are eigenvalues of $F$ acting on  $H^1(\overline{C}_{n_1},\mb{Q}_{\ell}),\dots,H^1(\overline{C}_{n_j},\mb{Q}_{\ell})$, then the product $\alpha_{n_1} \alpha_{n_2}\cdots \alpha_{n_j} $ is not equal to $q^{j/2}$.

\begin{thm}[Soul$\acute{\text{e}}$ \cite{So}]\label{ts}
Let $i$ be a positive integer with $0\leq i\leq d$. If $X$ satisfies the condition $(*)_i$ over $k$, then $\CH^i(X)$ is the direct sum of a free abelian group of finite rank and a group of finite exponent. Moreover the cycle map $\rho_X^i$ is bijective.
\end{thm}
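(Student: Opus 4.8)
The plan is to split the Chow motive of $X$ by the K\"unneth decomposition of the curves, match the resulting decompositions of $\CH^i$ and of $\ell$-adic cohomology under the cycle map, and then control each primitive piece by its weight together with the hypothesis $(*)_i$. First I would use, for each curve $C_l$, the decomposition $h(C_l)=\mathbf{1}\oplus h^1(C_l)\oplus\mb{L}$ with $h^1(C_l)$ realising to $H^1(\overline{C}_l,\mb{Q}_\ell)$; since the K\"unneth projectors of a curve are algebraic, this is compatible with cycle maps. Tensoring over $l=1,\dots,d$ writes $h(X)$ as a direct sum, indexed by maps $\epsilon\colon\{1,\dots,d\}\to\{0,1,2\}$, of summands isomorphic to $\mb{L}^{\otimes b(\epsilon)}\otimes M_{S(\epsilon)}$, where $S(\epsilon)=\epsilon^{-1}(1)$, $b(\epsilon)=\sharp\,\epsilon^{-1}(2)$ and $M_S:=\bigotimes_{l\in S}h^1(C_l)$. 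Using $\CH^i(\mb{L}^{\otimes b}\otimes N)=\CH^{i-b}(N)$ and the analogous shift for the realisation functor $M\mapsto H^{2i}(M)(i)^{G_k}$, one gets compatible direct-sum decompositions
\[
\CH^i(X)\;\cong\;\bigoplus_{S\subseteq\{1,\dots,d\}}\ \bigoplus_{b=0}^{d-|S|}\CH^{i-b}(M_S)^{\oplus\binom{d-|S|}{b}}
\]
and similarly for $H^{2i}(\overline{X},\mb{Q}_\ell(i))^{G_k}$ in terms of the cohomology of the $M_S$. Since $M_S$ is a direct summand of the motive of a product of $|S|$ curves, $\CH^n(M_S)=0$ unless $0\le n\le|S|$, so $\CH^n(M_S)$ occurs in $\CH^i(X)$ only for $\max(0,i+|S|-d)\le n\le\min(i,|S|)$; it then suffices to understand, for each $S$ with $|S|=j$ and such $n$, the group $\CH^n(M_S)$ and the cycle map $\CH^n(M_S)\otimes\mb{Q}_\ell\to H^{2n}(M_S)(n)^{G_k}$.

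Next I would invoke Deligne's theorem: the realisation of $M_S$ sits in cohomological degree $j$ and is pure of weight $j$, so the target $H^{2n}(M_S)(n)^{G_k}$ vanishes unless $2n=j$, and for $j$ even, $n=j/2$, it is spanned by the families $(\alpha_l)_{l\in S}$ of Frobenius eigenvalues with $\prod_{l\in S}\alpha_l=q^{j/2}$; one checks that $n=j/2$ lies in the contributing range precisely when $j\le\inf(2i,2d-2i)$. Thus the pieces with nonzero target are handled directly: $j=0$ gives $\mathbf{1}$, contributing $\mb{Z}$ with bijective cycle map; $j=1$ gives $\CH^0(h^1(C))=0$ and $\CH^1(h^1(C))=\mathrm{Pic}^0(C)$, finite as the group of $k$-points of an abelian variety, with target $0$; $j=2$, $n=1$ gives $\CH^1(h^1(C_1)\otimes h^1(C_2))\cong\mathrm{Hom}(J_1,J_2)$ ($J_1,J_2$ the Jacobians), free of finite rank, with cycle map onto $\bigl(H^1(\overline{C}_1,\mb{Q}_\ell)\otimes H^1(\overline{C}_2,\mb{Q}_\ell)(1)\bigr)^{G_k}$ bijective by Tate's theorem on homomorphisms of abelian varieties over finite fields \cite{Ta2}; and for even $j\ge4$ with $n=j/2$ in range, $(*)_i$ forbids $\prod_{l\in S}\alpha_l=q^{j/2}$, so the target is again $0$. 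Hence every summand carrying a nonzero cohomological target --- namely $j=0$, and $j=2$ with $n=1$ --- is matched with a bijective cycle map and contributes a free finitely generated group to $\CH^i(X)$.

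It then remains to show that on every other summand $\CH^n(M_S)$ is of finite exponent (so that $\CH^n(M_S)\otimes\mb{Q}_\ell=0$, matching the vanishing target). For $n=0$ this is immediate, since $\CH^0(M_S)$ is cut out by an idempotent from $\CH^0$ of a geometrically connected variety and maps to the vanishing $M_S$-component of $H^0$. For $n=1$ (now with $j\ge3$) it follows from $\CH^1(M_S)$ being cut out by a projector from $\mathrm{Pic}\bigl(\prod_{l\in S}C_l\bigr)$, which is finitely generated modulo the finite group $\mathrm{Pic}^0$ and injects rationally into $H^2$ --- where, as $j\ge3\neq2$, the $M_S$-component vanishes. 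For the remaining range $2\le n\le j$ I would argue by induction on $j$: the base case is $j=2$, $n=2$, i.e.\ the finiteness of the Albanese kernel of a product of two curves over a finite field, which follows from the Tate conjecture for that surface (known, being about divisors) together with the Colliot-Th\'el\`ene--Sansuc--Soul\'e theorem \cite{CSS}; the inductive step writes $M_S=h^1(C)\otimes M_{S'}$ with $|S'|=j-1$, so that $\CH^n(M_S)$ is the direct summand of $\CH^n\bigl(C\times\prod_{l\in S'}C_l\bigr)$ complementary to the contributions pulled back and pushed forward from a fibre, and one controls it in terms of $\CH^n(M_{S'})$ and $\CH^{n-1}(M_{S'})$ (of finite exponent by induction, since the instances of $(*)_i$ and of the range condition that are needed descend to the subproducts which actually occur), the finiteness of $\mathrm{Pic}^0(C)$, and, at $n=j/2$, Tate's theorem --- condition $(*)_i$ entering only to make the cohomological target of the $n=j/2$ piece vanish, so the induction outputs a \emph{finite} group there rather than one of positive rank. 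Granting all this, $\rho_X^i$ is bijective --- an isomorphism on the $j=0$ and $j=2$, $n=1$ summands and the zero map $0\to0$ on the rest --- and $\CH^i(X)$ is the direct sum of the free finitely generated group assembled from those two kinds of summand and a group of finite exponent assembled from the others.

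The hard part will be this last step: the induction on the number of curve factors, showing the primitive Chow groups $\CH^n(M_S)$ with $|S|\ge3$ and $2\le n\le j$ are of finite exponent (the $n=j/2$ case being zero-rank only by virtue of $(*)_i$). This cannot be read off $\ell$-adic cohomology, since injectivity of the cycle map --- already for $0$-cycles, let alone in intermediate codimension --- is not available; the role of $(*)_i$ together with the weight of $M_S$ is precisely to remove every cohomological obstruction, so that only a finiteness assertion, and not the full Tate--Beilinson conjecture, is needed. Carrying out that induction, and above all keeping the exponents uniformly bounded as $j$ grows, is where the real difficulty lies.
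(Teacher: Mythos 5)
The paper does not prove this statement at all: it is quoted verbatim from Soul\'e \cite[Th\'eor\`eme 3]{So}, so the comparison must be with Soul\'e's argument. Your first half does follow Soul\'e: the decomposition of the motive of $X$ via $h(C_l)=\mathbf{1}\oplus h^1(C_l)\oplus\mb{L}$, the reduction to the pieces $\mb{L}^{\otimes b}\otimes M_S$, the weight argument showing the Galois-invariant target vanishes unless $2n=\sharp S$, and the treatment of the pieces with nonzero target ($\sharp S=0$, and $\sharp S=2$, $n=1$ via Tate's theorem \cite{Ta2}) are all as in \cite{So}, including your verification that $2n=\sharp S\geq 4$ can only occur in the range $\sharp S\leq\inf(2i,2d-2i)$ covered by $(*)_i$.

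The genuine gap is exactly the step you flag as ``the hard part'': the finite-exponent claim for $\CH^n(M_S)$ on the summands with vanishing target. Your proposed induction on $\sharp S$ does not work as described, because there is no K\"unneth-type formula for Chow groups: writing $M_S=h^1(C)\otimes M_{S'}$ gives no way to bound $\CH^n(M_S)$ by $\CH^n(M_{S'})$ and $\CH^{n-1}(M_{S'})$ --- the $h^1(C)$-component of $\CH^n(C\times\prod_{l\in S'}C_l)$ \emph{is} the group you are trying to control, so the step is circular, and the uniform bound on exponents you worry about never materializes. Soul\'e's mechanism is entirely different and removes the need for any induction or for geometric inputs such as finiteness of Albanese kernels or \cite{CSS}: the Frobenius correspondence acts on $\CH^n$ of any motive over $\mb{F}_q$ rationally as multiplication by $q^n$, while on $M_S=\bigotimes_{l\in S}h^1(C_l)$ it satisfies (by Weil's theorem that the Frobenius endomorphism of a Jacobian is killed by its characteristic polynomial in ${\rm End}(J)\otimes\mb{Q}$) a monic polynomial whose roots are the products $\prod_{l\in S}\alpha_l$ of Weil numbers of weight $1$. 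Hence the nonzero integer $\prod\bigl(q^n-\prod_{l\in S}\alpha_l\bigr)$ annihilates $\CH^n(M_S)$ up to bounded denominators, and it is nonzero either for weight reasons ($2n\neq\sharp S$) or, when $2n=\sharp S\geq4$, precisely by the hypothesis $(*)_i$. This single Cayley--Hamilton-type argument yields all the finite-exponent statements at once, with explicit exponents; without it (or a substitute), your sketch does not establish the theorem.
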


\begin{rmk}
The condition $(*)_i$ holds for $i=0,1,d-1,d$. Therefore if $d\leq 3$, the Tate-Beilinson conjecture holds by Theorem \ref{ts}.
\end{rmk}

From Theorem \ref{ts}, our task is reduced to show that $X$ satisfies the condition $(*)_i$ for all $i$. In our case, from the assumption (b) in Theorem \ref{t} and Lemma \ref{l3}, we see that the product $\alpha_{n_1} \alpha_{n_2}\cdots \alpha_{n_j}$ does not belong to $\mb{Q}$. Hence $X$ satisfies the condition $(*)$ for all $i$. 
\end{proof}

\renewcommand{\proofname}{\rm \bf Proof of Theorem \ref{t}\,(2)}
\begin{proof}
From K\"unneth formula, we have
\begin{align*}
 H^{2i}(&\overline{X_{m_1}^{r_1}\times X_{m_2}^{r_2}\times \cdots \times X_{m_d}^{r_d}},\mb{Q}_{\ell}(i))^{G_k}\simeq \bigoplus_{i_1+\cdots+i_d=2i}  W(i_1,\dots,i_d),
 \end{align*}
where $$W(i_1,\dots,i_d):=\bigl(H^{i_1}(\overline{X_{m_1}^{r_1}},\mb{Q}_{\ell})\otimes \cdots \otimes H^{i_d}(\overline{X_{m_d}^{r_d}},\mb{Q}_{\ell})\bigr)^{F=q^i}.$$ From \eqref{c}, Lemma \ref{l3} and the assumption of the theorem, we obtain that if $i_j$ is odd for some $j$, then $W(i_1,\dots,i_d)=0$.
 
In the remaining case where $i_j$ is even for all $j$, from \eqref{c} we have
\begin{align*}
\bigotimes _{j=1}^dH^{i_j}(\overline{X_{m_j}^{r_j}},\mb{Q}_{\ell}(i_j/2))^{G_k} \simeq W(i_1,\dots,i_d).
\end{align*}
We know that if $r$ is odd, then the Tate conjecture $\T(X_m^{r}/k, i)$ is true for all $i$. Therefore the assertion follows from a similar argument in the proof of Theorem \ref{t}\,(1)\,(a).  
\end{proof}

\renewcommand{\proofname}{\rm \bf Proof of Theorem \ref{t}\,(3)}
\begin{proof}
Let $i$ be an integer with $0\leq i\leq r$, where $r=r_1+\cdots+r_d$. From K\"unneth formula and isomorphisms \eqref{c}, we have
\begin{align*}
 H^{2i}(&\overline{X_{m_1}^{r_1}\times X_{m_2}^{r_2}\times \cdots \times X_{m_d}^{r_d}},\mb{Q}_{\ell}(i))^{G_k}\simeq \bigoplus_{i_1+\cdots+i_{d}=i}  V(i_1,\dots,i_{d})^{G_k},
 \end{align*}
where $$V(i_1,\dots,i_{d}):=\bigl(H^{2i_1}(\overline{X_{m_1}^{r_1}},\mb{Q}_{\ell}(i_1))\otimes \cdots \otimes H^{2i_{d}}(\overline{X_{m_{d}}^{r_{d}}},\mb{Q}_{\ell}(i_{d}))\bigr).$$
Similarly to the above proof of (1)\,(a), a basis of $V(i_1,\dots,i_{d})^{G_k}$ corresponds to $d$-tuples $(\alpha_1,\alpha_2,\dots,\alpha_d)$ such that $\alpha_1\alpha_2\cdots\alpha_d=q^i$ (where $\alpha _j$ is a Weil number of weight $2i_j$ for $X_{m_j}^{r_j}/k$).

From assumption (i) of the theorem, Lemma \ref{l4} and isomorphism \eqref{c}, there is a positive integer $N$ such that for every such $d$-tuples $(\alpha_1,\alpha_2,\dots,\alpha_d)$, we have  
\begin{align*}
\alpha_1^N=q^{Ni_1}, \cdots, \alpha_d^N=q^{Ni_d}.
\end{align*}
From this we obtain that the map
\begin{align*}
\bigotimes _{j=1}^{d}H^{2i_j}(\overline{X_{m_j}^{r_j}},\mb{Q}_{\ell}(i_j))^{G_L} \longrightarrow V(i_1,\dots,i_{d})^{G_L}
\end{align*}
is surjective, where $L$ is a finite extension of $k$ with $[L:k]=N$. The assertion now follows from assumption (ii) of the theorem and the commutative diagram
\begin{align*}{\footnotesize 
\xymatrix{\displaystyle 
\bigoplus_{i_1+\cdots+i_{d}=i \ \ }\bigotimes _{j=1}^{d}\CH^{i_j}(X_{m_j}^{r_j}\times L)\otimes \mb{Q}_{\ell} \ar[r] \ar[d] &\displaystyle\bigoplus_{i_1+\cdots+i_{d}=i \ \ }\bigotimes _{j=1}^dH^{2i_j}(\overline{X_{m_j}^{r_j}},\mb{Q}_{\ell}(i_j))^{G_L} \ar@{>>}[d]\\
\CH^i(X_{m_1}^{r_1}\times X_{m_2}^{r_2}\times \cdots\times X_{m_{d}}^{r_{d}}\times L)\otimes\mb{Q}_{\ell}\ar[r]&H^{2i}(\overline{X_{m_1}^{r_1}\times X_{m_2}^{r_2}\times \cdots \times X_{m_{d}}^{r_{d^{\prime}}}},\mb{Q}_{\ell}(i))^{G_L}.
}}
\end{align*}
\end{proof}

\renewcommand{\proofname}{\rm \bf Proof of Theorem \ref{t}\,(4)}
\begin{proof}
Let $Y$ be the product of all factors of $X$ of odd dimension and let $Z$ be the product of all factors of $X$ of even dimension. From K\"unneth formula and isomorphisms \eqref{c}, we have
\begin{align*}
H^{2i}(\overline{X},\mb{Q}_{\ell}) \simeq \bigoplus_{i_1+i_2=i} H^{2i_1}(\overline{Y},\mb{Q}_{\ell})\otimes H^{2i_2}(\overline{Z},\mb{Q}_{\ell}).
\end{align*}

A basis of $\bigl(H^{2i_1}(\overline{Y},\mb{Q}_{\ell}(i_1))\otimes H^{2i_2}(\overline{Z},\mb{Q}_{\ell}(i_2))\bigr)^{G_k}$ corresponds to pairs $(\alpha,\beta)$ such that $\alpha\beta=q^i$ where $\alpha$ and $\beta$ are an eigenvalue of geometric Frobenius acting on $H^{2i_1}(\overline{Y},\mb{Q}_{\ell})$ and $H^{2i_2}(\overline{Z},\mb{Q}_{\ell})$ respectively. 

Let $m$ (resp. $n$) be the least common multiple of $\{m_j\;|\; \text{$r_j$ is odd (resp. even)}\}$. From assumption (i) of the theorem, ${\rm gcd}(m,n)\leq2$. From K\"unneth formula and Lemma \ref{l1}, there is a positive integer $N$ such that for every such pairs $(\alpha,\beta)$, we have
\begin{align*}
\alpha^N=q^i\beta^{-N} \in \mb{Q}(\zeta_m)\cap \mb{Q}(\zeta_n)=\mb{Q}.
\end{align*}
From this we obtain that the map
\begin{align*}
\bigoplus_{i_1+i_2=i} H^{2i_1}(\overline{Y},\mb{Q}_{\ell}(i_1))^{G_E}\otimes H^{2i_2}(\overline{Z},\mb{Q}_{\ell}(i_2))^{G_E}\longrightarrow H^{2i}(\overline{X},\mb{Q}_{\ell}(i))^{G_E}
\end{align*}
is surjective, where $E/k$ is a finite extension of degree $N$. Hence the assertion follows from Theorem \ref{t}\,(2)\,(3) and a commutative diagram
\begin{align*}{\footnotesize
\xymatrix{\displaystyle 
\bigoplus_{i_1+i_2=i}\CH^{i_1}(Y\times E)\otimes \CH^{i_2}(Z\times E)\otimes \mb{Q}_{\ell} \ar[r] \ar[d] &\displaystyle\bigoplus_{i_1+i_2=i \ \ }H^{2i_1}(\overline{Y},i_1)^{G_E}\otimes  H^{2i_2}(\overline{Z},i_2)^{G_E}\ar@{>>}[d]\\
\CH^i(X\times E)\otimes\mb{Q}_{\ell}\ar[r]&H^{2i}(\overline{X},\mb{Q}_{\ell}(i))^{G_E}.
}}
\end{align*}
Here $H^{2i_1}(\overline{Y},i_1)$ and $H^{2i_2}(\overline{Z},i_2)$ denote $H^{2i_1}(\overline{Y},\mb{Q}_{\ell}(i_1))$ and $H^{2i_2}(\overline{Z},\mb{Q}_{\ell}(i_2))$ respectively.
\end{proof}
\renewcommand{\proofname}{\textit{Proof}}

\subsection{Dimension of $\ell$-adic cohomology}
For a variety $X$ as in Theorem \ref{t}, the $\mb{Q}_{\ell}$-vector space $H^{2i}(\overline{X},\mb{Q}_{\ell}(i))^{G_k}$ does not vanish. In case where $X$ is a variety as in (2) of Theorem \ref{t}, $H^{2i}(\overline{X},\mb{Q}_{\ell}(i))^{G_k}$ comes from tensor products of Lefschetz motives. In other cases, $H^{2i}(\overline{X},\mb{Q}_{\ell}(i))^{G_k}$ may contain a subspace which does not comes from tensor products of Lefschetz motives. For example, $H^{2}(\overline{X_m^1\times X_m^1},\mb{Q}_{\ell}(1))^{G_k}$ contains the subspace which is isomorphic to ${\rm Hom}_{\mb{Q}_{\ell}}(V_{\ell}(J_m),V_{\ell}(J_m))^{G_k}$. Here $J_m$ is the Jacobian variety of $X_m^1$ over $k$ and $V_{\ell}(J_m)$ is defined as follows: let $J_m[n]$ denote the group of elements $x \in J_m(\overline{k})$ such that $nx=0$. We define $T_{\ell}(J_m)$ to be the projective limit of the groups $J_m[\ell^n]$ with respect to the maps $J_m[\ell^{n+1}]\stackrel{\times \ell}{\longrightarrow} J_m[\ell^n]$. Then we define $V_{\ell}(J_m)=\mb{Q}_{\ell}\otimes_{\mb{Z}_{\ell}}T_{\ell}(J_m)$. It is well known that $T_{\ell}(J_m)$ is a free module over $\mb{Z}_{\ell}$ of rank $(m-1)(m-2)$. Therefore $V_{\ell}(J_m)$ is a $\mb{Q}_{\ell}$-vector space of dimension $(m-1)(m-2)$.\bigskip

The dimension of $H^{2i}(\overline{X},\mb{Q}_{\ell}(i))^{G_k}$ over $\mb{Q}_{\ell}$ can be computed for a variety $X$ as in (2)--(4) of Theorem \ref{t}.

First let $X=X_{m_1}^{r_1}\times\cdots\times X_{m_d}^{r_d}$ be a variety as in (2). By the proof of Theorem \ref{t}\,(2), we have
\begin{align*}
H^{2i}(&\overline{X},\mb{Q}_{\ell}(i))^{G_k}\\
&=\bigoplus_{(i_1,\dots,i_d)\in I(i)}H^{2i_1}(\overline{X_{m_1}^{r_1}},\mb{Q}_{\ell}(i_1))^{G_k}\otimes \cdots \otimes H^{2i_d}(\overline{X_{m_d}^{r_d}},\mb{Q}_{\ell}(i_d))^{G_k}
\end{align*}
where $$I(i)=\Big\{(i_1,\dots,i_d)\;\Big|\; i_1+\cdots+i_d=i \;\text{and}\; 0\le i_j\le {\rm min}\{r_j,i\}\; \text{for all $j$}\Big\}.$$ The direct summand of right hand side is isomorphic to $\mb{Q}_{\ell}$. Hence we have
\begin{align}\label{d1}
\dim_{\mb{Q}_{\ell}}H^{2i}(\overline{X},\mb{Q}_{\ell}(i))^{G_k}=\sharp I(i)
\end{align}
where $\sharp $ denotes the cardinality of a finite set. 

Next let $X=X_{m_1}^{r_1}\times\cdots\times X_{m_d}^{r_d}$ be a variety as in (3). By the proof of Theorem \ref{t}\,(3), we have
\begin{align*}
H^{2i}(\overline{X},\mb{Q}_{\ell}(i)&)^{G_k}\\
=\bigoplus_{(i_1,\dots,i_{d})\in I^{\prime}(i)}&H^{2i_1}(\overline{X_{m_1}^{r_1}},\mb{Q}_{\ell}(i_1))^{G_k}\otimes \cdots \otimes H^{2i_{d}}(\overline{X_{m_{d}}^{r_{d}}},\mb{Q}_{\ell}(i_{d}))^{G_k}\\
&\oplus \bigoplus _{(i_1,\dots,i_{d})\in I^{''}(i)} H^{2i_1}(\overline{X_{m_1}^{r_1}},\mb{Q}_{\ell}(i_1))^{G_k}\otimes \cdots \otimes H^{2i_{d}}(\overline{X_{m_{d}}^{r_{d}}},\mb{Q}_{\ell}(i_{d}))^{G_k}
\end{align*}
where $$\displaystyle I^{\prime}(i)=\Big\{(i_1,\dots,i_{d})\;\Big|\; \sum_{j=1}^{d}i_j=i,\; 0\le i_j\le {\rm min}\{r_j,i\}\;\text{and}\; 2i_j\neq r_j\; \text{for all $j$}\Big\}$$ and 
\begin{align*}
I^{''}(i)=\Bigg\{(i_1,\dots,i_{d})\;\Bigg| 
\begin{array}{l}
\displaystyle \sum_{j=1}^{d}i_j=i,\; 0\le i_j\le {\rm min}\{r_j,i\}\;\text{for all $j$,}\\
2i_j= r_j\; \text{for some $j$}
\end{array}\Bigg\}
\end{align*}
The direct summand of the first part of the right hand side is isomorphic to $\mb{Q}_{\ell}$. Hence we have
\begin{align}\label{d2}
\dim_{\mb{Q}_{\ell}}H^{2i}(\overline{X},\mb{Q}_{\ell}(i))^{G_k}=\sharp I^{\prime}(i)+\sum_{(i_1,\dots,i_{d^{\prime}})\in I^{''}(i)}\,\prod_{2i_j=r_j} \dim_{\mb{Q}_{\ell}}H^{r_j}(\overline{X_{m_j}^{r_j}},\mb{Q}_{\ell}(i_j))^{G_k}.
\end{align}
The description of $\dim_{\mb{Q}_{\ell}}H^{r_j}(\overline{X_{m_j}^{r_j}},\mb{Q}_{\ell}(r_j/2))^{G_k}$ is known (cf. \cite[p.\ 125]{Sh3}). We recall it here. We use the notation in \S2.1. Let $m$ be a positive integer prime to $p$ and let $r$ be a positive even integer. We denote the cardinality of $k$ by $q$. Then we have 
\begin{align*}
\dim_{\mb{Q}_{\ell}}H^{r}(\overline{X_{m}^{r}},\mb{Q}_{\ell}(r/2))^{G_k}=1+\sharp \mf{B}_{m,r,q},
\end{align*}
where 
$$\mf{B}_{m,r,q}=\big\{\gamma \in \mf{D}_{m,r}\;|\;j(\gamma)=q^{r/2}\big\}.$$
To give another description of $\mf{B}_{m,r,q}$, we introduce some notation. For $\gamma=(\gamma_0,\dots,\gamma_{r+1}) \in \mf{D}_{m,r}$, we define 
$$\parallel \gamma \parallel=\sum_{i=1}^{r+1}\big\langle \frac{\gamma_i}{m}\big\rangle-1,$$
where $\langle x\rangle$ is the fractional part of $x\in \mb{Q}/\mb{Z}$. We write $H$ for the subgroup of $(\mb{Z}/m)^{\times}$ generated by $p$, and $f$ for the order of $H$. Then for a sufficiently large $q$,  the set $\mf{B}_{m,r,q}$ is equal to the following set
$$\Big\{\gamma \in \mf{D}_{m,r}\;\Big|\;\sum_{h \in H}\parallel ht\gamma \parallel=rf/2, \;{}^{\forall}t \in (\mb{Z}/m)^{\times}\Big\}.$$

Lastly let $X$ be a variety as in (4). Let $Y$ and $Z$ be the varieties as in the proof of Theorem \ref{t} (4) (i.e. $X=Y \times Z$). By the proof, we have
$$H^{2i}(\overline{X},\mb{Q}_{\ell}(i))^{G_k}\simeq \bigoplus_{i_1+i_2=i} H^{2i_1}(\overline{Y},\mb{Q}_{\ell}(i_1))^{G_k}\otimes H^{2i_2}(\overline{Z},\mb{Q}_{\ell}(i_2))^{G_k}.$$
Therefore the dimension of $H^{2i}(\overline{X},\mb{Q}_{\ell}(i))^{G_k}$ can be computed using the above descriptions \eqref{d1} \eqref{d2}.


\subsection{Alternative proof of Corollary \ref{cf}}
Let $k$ be a finite field of characteristic $p$ and $m$ be a positive integer prime to $p$. For an integer $r\geq 0$, let $V_m^r \subset \mb{P}_{k}^{r+1}$ be the Fermat variety of dimension $r$ and of degree $m$ defined by the following equation
\begin{align*}
x_0^m+x_1^m+x_2^m+\cdots +x_{r+1}^m=0.
\end{align*}
Shioda and Katsura \cite{SK} gave the ``inductive structure" of Fermat varieties of a common degree and of various dimensions, and proved that the Tate conjecture holds for Fermat surfaces using this structure. We here give an alternative proof of Corollary \ref{cf} using the inductive structure of Fermat varieties and Theorem \ref{t}\,(1).

\begin{thm}[Shioda-Katsura \cite{SK}, Shioda \cite{Sh2}]\label{isf}
Let $m$ be a positive integer prime to $p$ and $k$ be a finite field with $\zeta_{2m} \in k$. Let $r$ and $s$ be positive integers. Then there is a commutative diagram{\rm :}
\begin{align*}
\xymatrix{
& Z_m^{r,s} \ar[d]^{\beta } \ar[r]^{\pi } &Z_m^{r,s}/\mu_m \ar[d]^{\overline{\psi} }\\
V_m^{r-1}\times V_m^{s-1} \ar@{^{(}->}[r]^{ \ j}&V_m^r\times V_m^s \ar@{.>}[r]^{\varphi } &V_m^{r+s} & V_m^{r-1}\amalg V_m^{s-1}\ar@{_{(}->}[l]_{i \ \ }.
}
\end{align*}
Here $\mu_m$ is the group of $m$-th roots of unity and the above maps are defined as follows{\rm :}

$\varphi :$ the rational map of degree $m$ defined by 

\ \ \ \ $\varphi ((x_0:\cdots:x_{r+1}),(y_0:\cdots:y_{s+1}))$

 \ \ \ \ \ \ $=(x_0y_{s+1}:\cdots:x_{r+1}y_{s+1}:\zeta_{2m}x_{r+1}y_0:\cdots:\zeta_{2m}x_{r+1}y_{s+1})$,
 
$j : (((x_0:\cdots:x_r),(y_0:\cdots:y_s)))\mapsto ((x_0:\cdots:x_r:0),(y_0:\cdots:y_s:0))$,

$i=1_1\amalg i_2 :$ 

\ \ \ \ $\begin{cases}
i_1 : (x_0:\cdots:x_r)\longmapsto (x_0\cdots:x_r:0:\cdots:0)\\
i_2 : (y_0:\cdots:y_s)\longmapsto (0:\cdots:0:y_0:\cdots:y_s),
\end{cases}$

$\beta :$ the blowing up of $V_m^r\times V_m^s$ along $V_m^{r-1}\times V_m^{s-1}$,

$\pi :$ the quotient map of degree $m$,

$\overline{\psi } :$ the blowing up of $V_m^{r+s}$ along $V_m^{r-1}\amalg V_m^{s-1}$.\\
The action of $\mu_m$ on $V_m^r\times V_m^s$ is defined by

\ \ \ $((x_0:\cdots:x_{r+1}),(y_0:\cdots:y_{s+1}))$

\ \ \ \ \ \ \ \ \ $\mapsto ((x_0:\cdots:x_r:\zeta_mx_{r+1}),(y_0:\cdots:y_s:\zeta_my_{s+1}))$,\\
and this $\mu_m$-action naturally extends to that on $Z^{r,s}_m$. 
\end{thm}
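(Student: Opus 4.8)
The plan is to construct all the maps explicitly in homogeneous coordinates and to verify commutativity, together with the stated degree and birationality properties, by direct local computations, following Shioda--Katsura \cite{SK} and Shioda \cite{Sh2}.

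First I would check that $\varphi$ defines a rational map into $V_m^{r+s}$. Substituting the coordinates prescribed by $\varphi$ into the defining equation of $V_m^{r+s}\subset\mb{P}^{r+s+1}$ gives
\[
\sum_{i=0}^{r}(x_iy_{s+1})^m+\sum_{j=0}^{s}(\zeta_{2m}x_{r+1}y_j)^m
=y_{s+1}^{m}\sum_{i=0}^{r}x_i^m+\zeta_{2m}^{m}\,x_{r+1}^{m}\sum_{j=0}^{s}y_j^m,
\]
which vanishes because $\zeta_{2m}^{m}=-1$ while the Fermat equations for $V_m^r$ and $V_m^s$ give $\sum_{i=0}^{r}x_i^m=-x_{r+1}^{m}$ and $\sum_{j=0}^{s}y_j^m=-y_{s+1}^{m}$; thus the factor $\zeta_{2m}$ is forced by this cancellation. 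Using these equations again, all the coordinate functions of $\varphi$ vanish simultaneously exactly on $\{x_{r+1}=0\}\cap\{y_{s+1}=0\}=j(V_m^{r-1}\times V_m^{s-1})$, which is therefore the indeterminacy locus of $\varphi$; blowing it up makes $\varphi\circ\beta\colon Z_m^{r,s}\to V_m^{r+s}$ a morphism and gives the left-hand square.

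Next I would study the $\mu_m$-action. It preserves both Fermat equations, since it only multiplies $x_{r+1}$ and $y_{s+1}$ by $\zeta$, and under it every coordinate of $\varphi$ is multiplied by the single scalar $\zeta$, so $\varphi$ is $\mu_m$-invariant; a short computation shows its generic fibre is one free $\mu_m$-orbit, so $\deg\varphi=m$. The action fixes $j(V_m^{r-1}\times V_m^{s-1})$ pointwise and is free away from it, hence lifts to $Z_m^{r,s}$; there it fixes the exceptional divisor pointwise and acts on the transverse direction through the faithful character $\zeta\mapsto\zeta$, so in suitable affine charts it reads $\xi\mapsto\zeta\xi$ with all other coordinates fixed, and consequently $Z_m^{r,s}/\mu_m$ is smooth, with $\xi^m$ a local coordinate along the image of the exceptional divisor. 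Being $\mu_m$-invariant, $\varphi\circ\beta$ descends to $\overline{\psi}\colon Z_m^{r,s}/\mu_m\to V_m^{r+s}$ with $\overline{\psi}\circ\pi=\varphi\circ\beta$, and $\pi$ is the quotient map, of degree $m$.

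It then remains to identify $\overline{\psi}$ with the blow-up of $V_m^{r+s}$ along the two disjoint smooth subvarieties $i_1(V_m^{r-1})$ and $i_2(V_m^{s-1})$. Away from these loci $\varphi$ induces an isomorphism after dividing by $\mu_m$, so $\overline{\psi}$ is a proper birational morphism of smooth varieties, an isomorphism there. Over a point of $i_1(V_m^{r-1})$ the fibre of $\varphi$ is a copy of $V_m^{s}$ with a hyperplane section $V_m^{s-1}$ removed, on which the residual $\mu_m$-action is scalar multiplication in the standard affine chart, so the $\mu_m$-quotient is $\mb{P}^{s}\setminus V_m^{s-1}$; after the blow-up $\beta$ this compactifies, inside $Z_m^{r,s}/\mu_m$, to $\mb{P}^{s}$, which is exactly the fibre of the blow-up of $V_m^{r+s}$ along the codimension-$(s+1)$ centre $i_1(V_m^{r-1})$, and symmetrically over $i_2(V_m^{s-1})$ one gets fibres $\mb{P}^{r}$. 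One concludes by the universal property of blowing up: $\overline{\psi}^{-1}\!\bigl(i(V_m^{r-1}\amalg V_m^{s-1})\bigr)$ is a Cartier divisor in the smooth variety $Z_m^{r,s}/\mu_m$, so $\overline{\psi}$ factors through the blow-up, and the fibre computation promotes the factorization to an isomorphism compatible with the centres; matching the affine charts of $\beta$ with those of the blow-up then shows the whole diagram commutes. I expect this last step to be the main obstacle: proving that $Z_m^{r,s}/\mu_m$ is smooth and that $\overline{\psi}$ is precisely the named blow-up (rather than some other birational contraction) requires explicit work in charts of $\beta$ near the exceptional divisor, where one must keep track both of the $\mu_m$-action and of the $\xi$-dependent relation among the remaining coordinates coming from the Fermat equations, and compare with the charts of $\overline{\psi}$; the other steps reduce to the coordinate identities indicated above.
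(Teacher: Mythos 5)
The paper itself does not prove this statement: it is quoted verbatim from Shioda--Katsura \cite{SK} and Shioda \cite{Sh2}, so there is no internal proof to compare with, and your sketch in effect reconstructs the original argument of \cite{SK}. As far as it goes, it is correct: the factor $\zeta_{2m}$ is exactly what makes $\varphi$ land in $V_m^{r+s}$ (note that the last coordinate in the displayed formula for $\varphi$ should be $\zeta_{2m}x_{r+1}y_s$, a typo in the statement which your computation silently corrects); the common zero locus of the coordinate functions is $\{x_{r+1}=y_{s+1}=0\}=j(V_m^{r-1}\times V_m^{s-1})$, so that is the indeterminacy locus resolved by $\beta$; the $\mu_m$-action multiplies every coordinate of $\varphi$ by $\zeta$, hence $\varphi$ is invariant, it is free off the centre and fixes the centre and the exceptional divisor pointwise, acting as $\xi\mapsto\zeta\xi$ on a normal coordinate, so $Z_m^{r,s}/\mu_m$ is smooth and $\varphi\circ\beta$ descends to $\overline{\psi}$. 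The one substantive step you leave open --- that $\overline{\psi}$ is precisely the blow-up along $i(V_m^{r-1}\amalg V_m^{s-1})$ and not merely some birational contraction with the right fibres --- is indeed the heart of the theorem, and your fibre picture ($V_m^s\setminus V_m^{s-1}$ quotienting to $\mb{P}^s\setminus V_m^{s-1}$ and compactifying to $\mb{P}^s$ in $Z_m^{r,s}/\mu_m$, symmetrically with $\mb{P}^r$ over the other centre) constrains but does not by itself determine the morphism; closing this requires exactly the explicit chart comparison near the exceptional divisor (or a universal-property factorization upgraded to an isomorphism by a fibrewise and normal-bundle check), which is what Shioda--Katsura carry out in \cite{SK}. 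So the proposal follows the route of the cited source, its computations are sound, and you correctly identify where the remaining detail lies.
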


We can prove Corollary \ref{cf} by induction on dimension from the following lemma: 
\begin{lem}
Let $m$ be a positive integer prime to $p$ and $k$ be a finite field with $\zeta_{2m} \in k$. Let $W$ be a projective smooth variety over $k$. If $\T(W/k,i-1)$ and $\T(V_m^1\times V_m^1\times W/k,i)$ hold, then $\T(V_m^2\times W/k,i)$ also holds.
\end{lem}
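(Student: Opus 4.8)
The plan is to exploit the inductive structure of Theorem \ref{isf}, which expresses $V_m^{r+s}$ (for $r=s=1$, hence $V_m^2$) as a birational modification related to a quotient of $V_m^1\times V_m^1$. Specifically, taking $r=s=1$ in Theorem \ref{isf} and base-changing everything along $\mathrm{id}\times W$ (so replacing each variety $Z$ by $Z\times_k W$), we get a commutative diagram in which $\overline{\psi}\times\mathrm{id}_W$ is the blow-up of $V_m^2\times W$ along $(V_m^0\amalg V_m^0)\times W$, the map $\beta\times\mathrm{id}_W$ is the blow-up of $V_m^1\times V_m^1\times W$ along $(V_m^0\times V_m^0)\times W$, and $\pi\times\mathrm{id}_W$ is the degree-$m$ quotient by $\mu_m$ acting trivially on $W$. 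Since $V_m^0$ is a finite étale $k$-scheme (a $0$-dimensional Fermat ``variety''), its Chow motive is an Artin motive, and the Tate conjecture is trivial for $V_m^0\times W$ in every degree given $\T(W/k,\ast)$; in particular $\T(V_m^0\times W/k,i-1)$ and $\T(V_m^0\times W/k,i)$ follow formally from $\T(W/k,i-1)$ and a dimension-shift (note $\dim V_m^0=0$, so only $\T(W/k,i-1)$ is actually needed, after accounting for the Lefschetz twist in the blow-up formula).

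First I would record the standard blow-up formula for motives (as already used in the proof of Corollary \ref{ca}): for the blow-up $\beta$ along a smooth center of codimension $c$ one has a motivic decomposition into the motive of the base plus twisted copies of the motive of the center, and correspondingly the cycle class map and the $G_k$-invariant cohomology split compatibly. Applying this to $\beta\times\mathrm{id}_W$ shows that $\T(V_m^1\times V_m^1\times W/k,i)$ together with $\T(V_m^0\times V_m^0\times W/k,i-j)$ for the relevant twists $j\ge 1$ (all of which reduce to $\T(W/k,\le i-1)$, hence hold by hypothesis and by the triviality of $\T$ in the bottom degrees) implies $\T(Z_m^{1,1}\times W/k,i)$. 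Applying the blow-up formula to $\overline{\psi}\times\mathrm{id}_W$ shows conversely that $\T(V_m^2\times W/k,i)$ is equivalent to $\T(V_m^2\times W/k,i)$ being implied by $\T(Z_m^{1,1}/\mu_m\times W/k,i)$ together with $\T(V_m^0\times W/k,\le i-1)$.

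The remaining and most delicate point is to pass between $Z_m^{1,1}\times W$ and its quotient $(Z_m^{1,1}/\mu_m)\times W$ by the degree-$m$ map $\pi\times\mathrm{id}_W$. Since $m$ is invertible in the coefficient ring $\mb{Q}_\ell$ (and working with $\mb{Q}$-coefficients on Chow groups), the quotient map induces, via the transfer/averaging idempotent $\frac{1}{m}\sum_{g\in\mu_m}g^*$, an identification of $\CH^\ast((Z_m^{1,1}/\mu_m)\times W)_{\mb{Q}_\ell}$ with the $\mu_m$-invariants of $\CH^\ast(Z_m^{1,1}\times W)_{\mb{Q}_\ell}$, and likewise $H^\ast\big(\overline{(Z_m^{1,1}/\mu_m)\times W},\mb{Q}_\ell(\ast)\big)^{G_k}$ with the $\mu_m$-invariants of $H^\ast(\overline{Z_m^{1,1}\times W},\mb{Q}_\ell(\ast))^{G_k}$; these identifications are compatible with the cycle class maps because the $\mu_m$-action is by algebraic automorphisms. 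Hence surjectivity of $\rho^i$ for $Z_m^{1,1}\times W$ restricts to surjectivity on $\mu_m$-invariants, i.e. for $(Z_m^{1,1}/\mu_m)\times W$, giving $\T((Z_m^{1,1}/\mu_m)\times W/k,i)$. Chaining the three implications — from $\T(V_m^1\times V_m^1\times W/k,i)$ and $\T(W/k,i-1)$ to $\T(Z_m^{1,1}\times W/k,i)$, then to $\T((Z_m^{1,1}/\mu_m)\times W/k,i)$, then to $\T(V_m^2\times W/k,i)$ — completes the proof. I expect the transfer argument for the quotient by $\mu_m$ to be the main obstacle, since one must check that the averaging idempotent is genuinely algebraic (it is, as a $\mb{Q}$-linear combination of graphs of automorphisms) and that it commutes with the cycle class map and the $G_k$-action; once that is in place, the blow-up steps are formal consequences of the motivic decomposition already invoked in Corollary \ref{ca}. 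One should also note that the hypothesis $\zeta_{2m}\in k$ in Theorem \ref{isf} is harmless by Lemma \ref{l5}: it suffices to prove $\T$ after a finite base change, and $\T(V_m^2\times W\times_k L/L,i)$ for such an $L$ descends to $\T(V_m^2\times W/k,i)$ (and the hypotheses on $W$ and on $V_m^1\times V_m^1\times W$ likewise ascend to $L$ by the same lemma).
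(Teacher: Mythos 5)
Your proposal is correct and follows essentially the same route as the paper: crossing the Shioda--Katsura diagram of Theorem \ref{isf} (with $r=s=1$) with $W$, using the blow-up decompositions of Chow groups and cohomology for $\beta\times\mathrm{id}_W$ and $\overline{\psi}\times\mathrm{id}_W$ together with the $\mu_m$-averaging/transfer identification for the degree-$m$ quotient $\pi\times\mathrm{id}_W$ (all compatible with the cycle class maps and the commuting $G_k$- and $\mu_m$-actions), which is precisely how the paper obtains its isomorphisms \eqref{i1} and \eqref{i2}. The only quibble is your closing remark that the hypotheses ``ascend to $L$ by the same lemma'': Lemma \ref{l5} only descends the Tate conjecture from $L$ to $k$, not the reverse, but this is moot here because $\zeta_{2m}\in k$ is already assumed in the statement of the lemma.
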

\begin{proof} 
For a variety $V$, put $\CH^i(V)_{\mb{Q}_{\ell}}:=\CH^i(V)\otimes _{\mb{Z}}\mb{Q}_{\ell}$. Let $X:=V_m^2\times W$. By Theorem \ref{isf}, we have the diagram
{\small
\begin{align*}
\xymatrix{
& Z_m^{1,1}\times W \ar[d]^{\beta } \ar[r]^{\pi } &Z_m^{1,1}/\mu_m\times W \ar[d]^{\overline{\psi} }\\
V_m^0\times V_m^0\times W \ar@{^{(}->}[r]^{j}&V_m^1\times V_m^1\times W \ar@{.>}[r]^{ \ \ \varphi } &X & (V_m^0\amalg V_m^0)\times W \ar@{_{(}->}[l]_{i \ \ \ \ \ }.
}
\end{align*}}
Here the above maps are similar to that in Theorem \ref{isf}. From this diagram, we have the following isomorphisms (cf.\ \cite[$\S$2]{SK}, \cite[Example 1.7.6]{Fu})
\begin{align}\nonumber
H^{2i}(\overline{X},\mb{Q}_{\ell}(i))&\oplus H^{2(i-1)}(\overline{W},\mb{Q}_{\ell}(i-1))^{\oplus 2m} \\\label{i1}
\simeq & H^{2i}(\overline{V_m^1\times V_m^1\times W},\mb{Q}_{\ell}(i))^{\mu_m}\oplus H^{2(i-1)}(\overline{W},\mb{Q}_{\ell}(i-1))^{\oplus m^2}
\\\label{i2}
\CH^i(X)_{\mb{Q}_{\ell}}\oplus \CH^{i-1}&(W)_{\mb{Q}_{\ell}}^{\oplus 2m} \simeq \CH^i(V_m^1\times V_m^1\times W)^{\mu_m}_{\mb{Q}_{\ell}}\oplus \CH^{i-1}(W)_{\mb{Q}_{\ell}}^{\oplus m^2}.
\end{align}
Here $H^{2i}(\overline{V_m^1\times V_m^1\times W},\mb{Q}_{\ell}(i))^{\mu_m}$ and $\CH^i(V_m^1\times V_m^1\times W)^{\mu_m}_{\mb{Q}_{\ell}}$ are the $\mu_m$-invariant subspace of $H^{2i}(\overline{V_m^1\times V_m^1\times W},\mb{Q}_{\ell}(i))$, $\CH^i(V_m^1\times V_m^1\times W)_{\mb{Q}_{\ell}}$ respectively. The $G_k$-action and the $\mu_m$-action commute, and the isomorphisms \eqref{i1}\,\eqref{i2} are compatible with the cycle class map \eqref{map}. Therefore if $\T(W/k,i-1)$ and $\T(V_m^1\times V_m^1\times W/k,i)$ hold, then $\T(V_m^2\times W/k,i)$ also holds.
\end{proof}
\section{A zeta value of products of four curves}
Let $X$ be a projective smooth fourfold over a finite field. In his paper \cite{Ko} Theorem 2, Kohmoto gave a formula of the special value of the zeta function of $X$ at $s=2$ assuming the Tate-Beilinson conjecture $\boldsymbol{TB}^2(X)$ in the sense of \cite{Ko} and that $\CH^2(X)$ is finitely generated. His work is based on formulae of zeta values established by Bayer, Neukirch, Schneider, Milne and Kahn (\cite{BN}, \cite{Sc}, \cite{Mi} and \cite{Ka}). Now let $X/k$ be as in Theorem \ref{t}\,(1)\,(b). The Galois group $G_k$ of $\overline{k}/k$ acts semisimply on $H^i(\overline{X},\mb{Q}_{\ell})$ by a theorem of Tate \cite{Ta2}. Hence $\boldsymbol{TB}^2(X)$ holds for $X/k$ by Theorem \ref{t}\,(1)\,(b) and Corollary \ref{ct}\,(1)\,(4), and we obtain the following corollaries using the result of Kohmoto mentioned above. This means that we give a new example which satisfy the assumptions above. 

We introduce some notation. Let $W_{\nu }\Omega _{X,{\rm log}}^n$ be the $\acute {\text{e}}$tale sheaf of the logarithmic part of the Hodge-Witt sheaf $W_{\nu }\Omega _{X}^n$ (\cite{Il}). We define $\mb{Z}/m(2)$  and $\mb{Q}/\mb{Z}(2)$ as follows:
\begin{align*}
\mb{Z}/m(2):=&\left\{ \begin{array}{ll}
\mu_m^{\otimes 2} & \text{if } (m,p)=1 \\
\mu_{m^{\prime }}^{\otimes 2}\oplus W_{\nu }\Omega _{X,{\rm log}}^2[-2] &\text{if } m=p^{\nu}m^{\prime } \text{ for }\nu>0 \text{ and }(m^{\prime },p)=1
\end{array}\right.\\
\mb{Q}/\mb{Z}(2):=& \displaystyle \varinjlim _m \mb{Z}/m(2).
\end{align*}
We define $\mb{Z}(2)$ as the $\acute{\text{e}}$tale sheafification on $X$ of the presheaf of cochain complexes
\begin{align*}
U\longmapsto z^2(U,*)[-4],
\end{align*}
where $z^2(U,*)$ denotes Bloch's cycle complex (\cite{Bl})
\begin{eqnarray*}
\cdots \longrightarrow \ z^2(U,r) \ \stackrel{d_r}{\longrightarrow } \ z^2(U,r-1) \ \stackrel{d_{r-1}}{\longrightarrow } \cdots \stackrel{d_1}{\longrightarrow } \ z^2(U,0).
\end{eqnarray*}
Here $z^2(U,r)$ is placed in degree $-r$. We define the unramified cohomology group $H_{ur}^3(k(X),\mb{Q}/\mb{Z}(2))$ as the kernel of the boundary map of a localization sequence
\begin{align*}
H^3({\rm Spec}(k(X)),\mb{Q}/\mb{Z}(2))\longrightarrow \bigoplus_{x \in X^{(1)}}H_x^4({\rm Spec}(\ml{O}_{X,x}),\mb{Q}/\mb{Z}(2)),
\end{align*}
where $k(X)$ is the function field of $X$, and $X^{(1)}$ is the set of points of $X$ of codimension one.
\begin{rmk}
By Bloch \cite{Bl} and Geisser$-$Levine \cite{GL}, we know that the complex $\mb{Z}(2)\otimes\mb{Z}/m$ is isomorphic to $\mb{Z}/m(2)$ defined above in the derived category of complexes of \'etale sheaves.
\end{rmk}

By Theorem \ref{t} and results of Kohmoto \cite[Theorem 1, Theorem 2]{Ko}, we obtain the following corollaries:
\begin{cor}\label{ft}
Let $X/k$ be as in Theorem \ref{t}\,(1)\,(b) and assume $\dim X=4$.\\
\textup{(1)} The unramified cohomology group $H_{ur}^3(k(X),\mb{Q}/\mb{Z}(2))$ is finite.\\
\textup{(2)} $H^5(X,\mb{Z}(2))$ is finite.\\
\textup{(3)} The intersection pairing
\begin{align*}
\CH^2(X)\times \CH^2(X)\longrightarrow \CH^4(X)\simeq \CH_0(X)\stackrel{\rm{deg}}{\longrightarrow }\mb{Z}
\end{align*}
is non-degenerate when tensored with $\mb{Q}$.\\
\textup{(4)} The subgroup $\CH^2(X,i)_{tor}$ of torsion elements of the higher Chow group $\CH^2(X,i)$ is finite for $i=1,2,3,$ and zero for $i\geq 4$. 
\end{cor}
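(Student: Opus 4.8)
The plan is to invoke Kohmoto's results directly, once we have verified that the hypotheses needed there are met by a variety $X/k$ as in Theorem \ref{t}\,(1)\,(b) with $\dim X = 4$. Concretely, Kohmoto's Theorems 1 and 2 in \cite{Ko} establish all of (1)--(4) under two standing assumptions on a projective smooth fourfold $X$ over a finite field: that the Tate--Beilinson conjecture $\boldsymbol{TB}^2(X)$ holds, and that $\CH^2(X)$ is finitely generated. So the entire task reduces to checking these two inputs for our $X$.

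First I would verify that $\CH^2(X)$ is finitely generated. By Theorem \ref{t}\,(1)\,(b), $\CH^i(X)$ is the direct sum of a free abelian group of finite rank and a group of finite exponent for all $i$; in particular for $i=2$. By the theorem of Colliot-Th\'el\`ene--Sansuc--Soul\'e \cite{CSS} quoted in the introduction, the torsion subgroup $\CH^2(X)_{tor}$ of a projective smooth geometrically connected variety over a finite field is finite. A group of finite exponent whose torsion is finite is itself finite, so the finite-exponent summand is finite, whence $\CH^2(X)$ is finitely generated. (This is exactly the chain of implications already used to deduce Corollary \ref{ct}\,(4).) Next I would check $\boldsymbol{TB}^2(X)$. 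By Theorem \ref{t}\,(1)\,(b) together with Corollary \ref{ct}\,(1), Conjecture \ref{cj} holds for $X$, so the cycle map $\CH^2(X,j)\otimes\mb{Q}_\ell \to H^{4-j}(\overline{X},\mb{Q}_\ell(2))^{G_k}$ is bijective for all $j\geq 0$; in particular $\rho_X^2$ is bijective. The remaining ingredient of $\boldsymbol{TB}^2(X)$ in the sense of \cite{Ko} is semisimplicity of the $G_k$-action on $H^i(\overline{X},\mb{Q}_\ell)$: since $X$ is a product of curves, this follows from Tate's theorem \cite{Ta2} (the Galois action on the $H^1$ of each curve factor is semisimple, and a tensor product of semisimple representations of the procyclic group $G_k$ with eigenvalues that are Weil numbers is again semisimple). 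Thus $\boldsymbol{TB}^2(X)$ holds.

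With both hypotheses confirmed, statements (1), (2), (3) are precisely the conclusions of \cite[Theorem 2]{Ko} applied to $X$, and statement (4) is \cite[Theorem 1]{Ko}; one only needs to match the notation for $\mb{Z}/m(2)$, $\mb{Z}(2)$ and the unramified cohomology group, which coincide with the definitions recalled above (using Bloch \cite{Bl} and Geisser--Levine \cite{GL} to identify $\mb{Z}(2)\otimes^{\mb L}\mb{Z}/m \cong \mb{Z}/m(2)$, as in the Remark).

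I do not anticipate a genuine obstacle: the corollary is a packaging of Theorem \ref{t}\,(1)\,(b), Corollary \ref{ct}, the finiteness theorem of \cite{CSS}, and Kohmoto's formulae. The one point requiring a little care is the semisimplicity clause in $\boldsymbol{TB}^2(X)$ — it is not literally part of Theorem \ref{t}, but it is automatic here because the factors of $X$ are curves and one may appeal to \cite{Ta2}. Everything else is a direct citation.
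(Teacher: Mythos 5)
Your proposal is correct and follows exactly the paper's own route: verify that $\CH^2(X)$ is finitely generated via Theorem \ref{t}\,(1)\,(b) together with the Colliot-Th\'el\`ene--Sansuc--Soul\'e finiteness theorem (i.e.\ Corollary \ref{ct}\,(4)), verify $\boldsymbol{TB}^2(X)$ from Corollary \ref{ct}\,(1) plus semisimplicity of the Frobenius action supplied by Tate's theorem \cite{Ta2}, and then quote Kohmoto's Theorems 1 and 2. No gaps; this is the same argument the paper gives.
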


\begin{rmk}
Corollary \ref{ft}\,(4) holds for arbitrary projective smooth fourfolds over finite fields by \cite[Theorem 1(d)]{Ko}
\end{rmk}

For a smooth projective variety $X$ over a finite field $k=\mb{F}_q$, the zeta function of $X$ is defined as follows:
\begin{align*}
\zeta (X,s):=Z(X/k,q^{-s}) \ \text{ with } \ Z(X/k,t):={\rm exp}\Bigl( \sum_{n\geq 1} \frac{\sharp X(\mb{F}_{q^n})}{n} t^n\Bigr),
\end{align*}
where $\sharp $ denotes the cardinality of a finite set. We define the special value of $\zeta (X,s)$ at $s=2$ as
\begin{align*}
\zeta (X,2)^{*}:= \lim _{s\rightarrow 2} \zeta (X,s)(1-q^{2-s})^{-\rho _2},
\end{align*}
where $\rho _2$ is the order of $\zeta (X,s)$ at $s=2$. 
\begin{cor}
Let $X/k$ be as in Theorem \ref{t}\,(1)\,(b) and assume $\dim X=4$. Then the following formula holds{\rm :}
\begin{align*}
&\zeta (X,2)^{*}=\\
&(-1)^{S(2)}\cdot q^{\chi (X,\ml{O}_X,2)}\cdot \frac{\mid H_{ur}^3(k(X),\mb{Q}/\mb{Z}(2))\mid ^2}{\mid H^5(X,\mb{Z}(2))\mid\cdot R_1 }\cdot \prod_{i=0}^3\mid \CH^2(X,i)_{tor}\mid ^{2\cdot (-1)^i}. 
\end{align*}
Here $R_1$ is the order of the cokernel of the map
\begin{align*}
\CH^2(X)\longrightarrow {\rm Hom}(\CH^2(X),\mb{Z})
\end{align*}
induced by the intersection pairing, $S(2)$ and $\chi (X,\ml{O}_X,2)$ are defined as
\begin{align*}
&S(2) :=\sum_{a>4}\rho _{\frac{a}{2}} \ \ \ \bigl( \rho _{\frac{a}{2}}:=\rm{ord}_{s=\frac{a}{2}}\zeta (X,s)\bigr),\\
\chi (X,\ml{O}_X,2):=\sum_{i,j}&(-1)^{i+j}(2-i)\dim_kH^j_{Zar}(X,\Omega _X^i) \ (0\leq i\leq 2, 0\leq j \leq 4).
\end{align*}
\end{cor}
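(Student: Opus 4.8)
The plan is to deduce the formula directly from Kohmoto's Theorem~2 (\cite[Theorem~2]{Ko}), which establishes precisely this identity for every projective smooth fourfold over a finite field satisfying the Tate--Beilinson conjecture $\boldsymbol{TB}^2(X)$ in the sense of \cite{Ko} and for which $\CH^2(X)$ is finitely generated. Thus the entire argument reduces to checking these two hypotheses for $X/k$ as in Theorem~\ref{t}\,(1)\,(b) and then quoting \cite[Theorem~2]{Ko}; no independent evaluation of the individual factors is required, since the finiteness of $H_{ur}^3(k(X),\mb{Q}/\mb{Z}(2))$, of $H^5(X,\mb{Z}(2))$ and of the groups $\CH^2(X,i)_{tor}$, together with the finiteness of the cokernel $R_1$ of the intersection pairing, is part of the conclusion of Kohmoto's Theorem~1 (recorded here as Corollary~\ref{ft}) under the very same hypotheses.

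First I would verify that $\CH^2(X)$ is finitely generated. By Theorem~\ref{t}\,(1)\,(b), $\CH^2(X)$ is the direct sum of a free abelian group of finite rank and a group of finite exponent; this latter summand is exactly the torsion subgroup $\CH^2(X)_{tor}$, which is finite by the Colliot-Th\'el\`ene--Sansuc--Soul\'e theorem quoted above. Hence $\CH^2(X)$ is finitely generated, which is the content of Corollary~\ref{ct}\,(4). Next I would check $\boldsymbol{TB}^2(X)$: this amounts to the bijectivity of the cycle class map $\CH^2(X)\otimes\mb{Q}_{\ell}\to H^4(\overline{X},\mb{Q}_{\ell}(2))^{G_k}$ (and of its analogues over finite extensions of $k$), together with the semisimplicity of the $G_k$-action on $H^i(\overline{X},\mb{Q}_{\ell})$. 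The bijectivity is the case $(i,j)=(2,0)$ of Conjecture~\ref{cj} for $X$, which holds by Corollary~\ref{ct}\,(1), itself a consequence of Theorem~\ref{t}\,(1)\,(b) and the results of Geisser and Kahn. The semisimplicity follows from Tate's theorem \cite{Ta2}: since $X$ is a product of curves, $H^*(\overline{X},\mb{Q}_{\ell})$ is built by the K\"unneth formula from the $H^1$ of the factors, each of which is the rational Tate module of the Jacobian of a curve, an abelian variety on which $G_k$ acts semisimply by \cite{Ta2}, and direct sums and tensor products of semisimple Galois representations are again semisimple.

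Finally I would apply \cite[Theorem~2]{Ko} to $X$, which yields the stated formula verbatim. The one point requiring care — and the step I would dwell on — is matching the precise formulation of $\boldsymbol{TB}^2(X)$ used in \cite{Ko} with what is available here, namely confirming that the conjunction of the bijectivity of $\rho_X^2$ over all finite extensions with Galois semisimplicity is literally the hypothesis under which Kohmoto's Theorem~2 is proved. There is no genuine cohomological or analytic obstacle beyond this bookkeeping, since all the auxiliary finiteness assertions entering the right-hand side are already supplied by Corollary~\ref{ft}.
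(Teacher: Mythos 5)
Your proposal is correct and follows essentially the same route as the paper: verify the two hypotheses of Kohmoto's Theorem~2 --- the Tate--Beilinson conjecture $\boldsymbol{TB}^2(X)$ (bijectivity of the cycle class map via Theorem~\ref{t}\,(1)\,(b) and Corollary~\ref{ct}, plus semisimplicity of the Frobenius action from Tate's theorem, $X$ being a product of curves) and finite generation of $\CH^2(X)$ (from the structure statement in Theorem~\ref{t}\,(1)\,(b) together with the Colliot-Th\'el\`ene--Sansuc--Soul\'e finiteness of $\CH^2(X)_{tor}$) --- and then quote Kohmoto's formula verbatim. Your added detail on checking semisimplicity via K\"unneth and on matching the formulation of $\boldsymbol{TB}^2(X)$ is sound bookkeeping, not a departure from the paper's argument.
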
\bigskip

\noindent\textbf{Acknowledgements} The author expresses his gratitude to Professors Kanetomo Sato and Thomas Geisser for many helpful suggestions and comments. He also thank Professors Takao Yamazaki and Noriyuki Otsubo for valuable comments.

\bigskip
\begin{flushleft}
Rin Sugiyama\\
Graduate School of Mathematics, Nagoya University\\
Furo-cho, Chikusa-ku, Nagoya 464-8602, Japan\\
e-mail: rin-sugiyama@math.nagoya-u.ac.jp
\end{flushleft}
\end{document}